\documentclass[reqno]{amsart}
\usepackage[bb=dsserif]{mathalpha}
\usepackage{hyperref}
\usepackage{float}
\usepackage{amsmath}
\usepackage{graphicx} 
\usepackage{latexsym}
\usepackage{amsfonts}
\usepackage{amssymb}
\usepackage{xcolor} 
\theoremstyle{plain}
\newtheorem{theorem}{Theorem}
\newtheorem{corollary}[theorem]{Corollary}
\newtheorem{lemma}[theorem]{Lemma}

\theoremstyle{definition}

\newtheorem{remark}[theorem]{Remark}
\newtheorem*{remark*}{Remark}

\newcommand{\pr}{\mathbb{P}}
\newcommand{\R}{\mathbb{R}}

\newcommand{\lf}{\lfloor}
\newcommand{\rf}{\rfloor}
\newcommand{\E}{\mathbb{E}}
\newcommand{\ph}{\varphi}
\newcommand{\Z}{\mathbb{Z}}

\title[Corrected diffusion approximation for conditioned walks]
{Corrected diffusion approximation for random walks conditioned to stay positive}
\date{}
\author[Denisov]{Denis Denisov}
\address{Department of Mathematics, University of Manchester, UK}
\email{denis.denisov@manchester.ac.uk}

\author[Tarasov]{Alexander Tarasov}
\address{Faculty of Mathematics, Bielefeld University, Germany}
\email{atarasov@math.uni-bielefeld.de}

\author[Wachtel]{Vitali Wachtel}
\address{Faculty of Mathematics, Bielefeld University, Germany}
\email{wachtel@math.uni-bielefeld.de}

\keywords{Random walk, exit time, Rayleigh distribution, diffusion approximation, Berry-Esseen inequality}
    \subjclass{Primary 60G50; Secondary 60G40, 60F17}
\begin{document}

\begin{abstract}
    Let $S_n$ be a random walk with i.i.d. increments which have zero mean and finite variance. For every $x\ge0$ we define the stopping time $\tau_x:=\inf\{n\ge1:x+S_n\le0\}$ and consider the probabilities 
    $\pr(x+S_n\ge y,\tau_x>n)$. We study the quality of the normal approximation for these probabilities and derive a Berry-Esseen-type inequality for $\pr(x+S_n\ge y|\tau_x>n)$. Our Theorem~\ref{thm:BE} is an extension of the results in \cite{DTW24a} where we have considered the special case $x=0$. It is also worth mentioning that Theorem~\ref{thm:BE} complements the results of Siegmund and Yuh~\cite{Siegmund-Yuh} on the corrected diffusion approximation.
\end{abstract}

\maketitle
\section{Introduction}
Let  $\{X_k\}$ be a sequence of  independent, identically distributed  random variables with zero mean 
$\E X_1=0$ and finite variance $\E X_1^2=:\sigma^2\in(0,\infty)$. 
Consider a random walk 
 $\{S_n; n\ge0\}$ defined as follows, 
 $S_0=0$ and 
\begin{align*}
    S_n:=X_1+X_2+\ldots+X_n,\ n\ge1.
\end{align*}
For every $x\ge 0$ we define the stopping time 
$$
\tau_x:=\inf\{n\ge1:x+S_n\le 0\}.
$$
The main purpose of the present paper is to study the quality of normal approximation for probabilities $\pr(x+S_n\ge y,\tau_x>n)$ and
$\pr(x+S_n\ge y|\tau_x>n)$. 

In \cite{DTW24a} we have considered the case $x=0$ and have proved that there exists an absolute constant $A_0$ such that 
$$
\sup_{y\ge0}\left|\pr(S_n\ge y|\tau_0>n)-e^{-y^2/2\sigma^2n}\right|
\le A_0\frac{(\E |X_1|^3)^3}{\sigma ^9\sqrt{n}}
$$
and 
$$
\left|\frac{\pr(\tau_0>n)}{\sqrt{\frac{2}{\pi}}\E|S_{\tau_0}|n^{-1/2}}-1\right|
\le A_0\frac{(\E |X_1|^3)^3}{\sigma ^9\sqrt{n}}
$$
for all $n\ge 1$. These estimates can be seen as an analogue of the classical
Berry-Esseen inequality, which says that
\begin{equation}\label{eq.berry-esseen.classical}
    \left|\pr\left(\frac{S_n}{\sigma \sqrt{n}} \le x\right) - \Phi(x)\right|
\le
    \gamma_0\frac{\E |X_1|^3}{\sigma ^3\sqrt{n}},
\end{equation}
where $\Phi$ stands for the standard normal distribution function and  one can take $\gamma_0=0.4785$. 

In the present note we are going to generalize the results from \cite{DTW24a} to the case of arbitrary starting point $x\ge 0$. It turns out that in this case it is more convenient to consider the probabilities $\pr(x+S_n\ge y,\tau_x>n)$ than the conditioned probabilities $\pr(x+S_n\ge y|\tau_x>n)$. Here is our main result.
\begin{theorem}
\label{thm:BE}
Assume $\E X_1 = 0, \E |X_1|^2 = \sigma^2$ and $\E |X_1|^3 < \infty$. 
Then there exists an absolute constant $A_1$ such that
\begin{align}
\label{eq:BE1}
    &\left|\pr(x+S_n\ge y,\tau_x>n)
   -\left[\Phi\left(\frac{y+x}{\sigma\sqrt{n}}\right)
      -\Phi\left(\frac{y-x}{\sigma\sqrt{n}}\right)\right]
   -\frac{2}{\sqrt{2\sigma^2\pi n}} e^{-\frac{y^2}{2\sigma^2n}} \E|x+S_{\tau_x}|\right|  \nonumber \\
  &\hspace{4cm} \le  
  A_1\frac{(\E|X_1|^3)^3\E|S_{\tau_x}|}{\sigma^9 \sqrt{n}(x+\sqrt{n})}.
\end{align}
\end{theorem}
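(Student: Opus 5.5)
We will prove the estimate by a reflection/Wald-type decomposition combined with the case $x=0$ treated in \cite{DTW24a}.

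\textbf{Decomposition at the exit time.} The starting point is the identity
\begin{align*}
\pr(x+S_n\ge y,\tau_x>n)=\pr(x+S_n\ge y)-\pr(x+S_n\ge y,\tau_x\le n).
\end{align*}
We decompose the second probability at $\tau_x=k$ and use the strong Markov property. Then
\[
\pr(x+S_n\ge y,\tau_x\le n)=\sum_{k\le n}\E\bigl[\pr(S_{n-k}\ge y-z)\big|_{z=x+S_{\tau_x}};\tau_x=k\bigr].
\]
The classical Berry--Esseen inequality \eqref{eq.berry-esseen.classical} replaces $\pr(x+S_n\ge y)$ by $1-\Phi((y-x)/\sigma\sqrt n)$. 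Similarly, we want to replace the inner probability by the reflected Gaussian term $1-\Phi((y+|z|)/\sigma\sqrt{n})$ plus a correction. Heuristically, $\pr(S_{n-k}\ge y-z)$ with $z\le0$ is compared with $\pr(S_n\ge y+x)$ shifted by $|z|+x$. A Taylor expansion of $\Phi$ in the overshoot $|x+S_{\tau_x}|$ then produces exactly the term $\frac{2}{\sqrt{2\pi\sigma^2 n}}e^{-y^2/2\sigma^2n}\E|x+S_{\tau_x}|$. The point is that Wald's identity $\E S_{\tau_x}=0$ does not hold directly since $\tau_x$ may be infinite-mean. So we instead use the harmonic function $V(x)=x+\E|x+S_{\tau_x}|=\E|S_{\tau_x}|$, which is the natural renewal-type quantity, and the fact that $\E|S_{\tau_x}|\ge x$.

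\textbf{Reduction to the case $x=0$.} To make the step above rigorous, we would rather avoid summing the errors over $k$. Instead, we use the representation via the ladder structure: for $x\ge0$,
\[
\pr(x+S_n\ge y,\tau_x>n)=\sum_{j=0}^n\int \pr(S_j\in dz,\ \tau^+>j\text{-type event})\,\pr(x+z+S'_{n-j}\ge y,\ \tau'_0>n-j),
\]
i.e.\ a splitting at the time of the last minimum (Sparre--Andersen / Wiener--Hopf factorisation). Here the minimum part is governed by the renewal function $H(x)$ of descending ladder heights, with $\E|S_{\tau_x}|$ comparable to $H(x)$. The factor $\pr(\cdot,\tau_0>m)$ is controlled by the $x=0$ results of \cite{DTW24a}. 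Their Rayleigh approximation with $O(n^{-1/2})$ relative error, integrated against the renewal measure, yields the main terms: the difference of $\Phi$'s comes from the bulk of the renewal measure, and the overshoot term comes from the tail. Concretely, we would first prove the bound for $x\le\sqrt n$, where the right-hand side is of order $\E|S_{\tau_x}|/n$ and the main term is of the same order $x/\sqrt n$. For $x>\sqrt n$ we would use directly the unconditioned Berry--Esseen bound together with a maximal-inequality estimate $\pr(\tau_x\le n)\le C\,\E|X|^3 n/(\sigma^2 x^3)$-type bounds. This explains the factor $(x+\sqrt n)^{-1}$.

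\textbf{Main obstacle.} The hardest step will be showing that the error terms add up to $\E|S_{\tau_x}|/(\sqrt n(x+\sqrt n))$ rather than to something like $\log n$ times that. The errors from \cite{DTW24a} carry relative factor $(n-j)^{-1/2}$ and are integrated against the renewal measure, so summability must be handled carefully near $j\approx n$. There we plan to use the local bound $\pr(\tau_0>m)\asymp m^{-1/2}$ with explicit constants from \cite{DTW24a} and a concentration-function estimate for $S_j$. We must also track all constants so that they enter only through $(\E|X_1|^3)^3/\sigma^9$. For this we would rescale to $\sigma=1$ and use $\E|X_1|^3\ge\sigma^3$ to absorb lower powers.
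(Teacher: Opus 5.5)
\textbf{Gap 1: the case $x>\sqrt n$.} Your treatment of this range does not work (take $\sigma=1$). The bound $\pr(\tau_x\le n)\le C\,\E|X_1|^3\,n/x^3$ is false as written. At $x=\sqrt n$ its right-hand side tends to $0$, whereas $\pr(\tau_x\le n)=\pr(\min_{k\le n}S_k\le-\sqrt n)\to 2(1-\Phi(1))>0$. What maximal inequalities actually give (Doob together with \eqref{eq:3-moment}) is $\pr(\tau_x\le n)\le C(n\E|X_1|^3+n^{3/2})/x^3$, and this is $O(n^{-1/2})$ only when $x\ge n^{2/3}$.

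This range is also not easy. For $x>\sqrt n$ one has $x\le\E|S_{\tau_x}|\le x+3\E|X_1|^3$, so the right-hand side of \eqref{eq:BE1} is of order $(\E|X_1|^3)^3/\sqrt n$ and the overshoot term can be absorbed. So here the theorem is exactly a reflection principle with a Berry--Esseen rate, uniform in $y$; for $y=0$ it contains \eqref{eq:Ales-bound}. For $\sqrt n<x<n^{2/3}$ the event $\{\tau_x\le n,\ x+S_n\ge y\}$ has probability of order one. It must be matched with $1-\Phi((y+x)/\sqrt n)$ to accuracy $n^{-1/2}$, which an unconditional Berry--Esseen bound plus a maximal inequality cannot deliver.

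\textbf{Gap 2: the case $x\le\sqrt n$.} The decomposition at the last minimum is legitimate, but the obstacle you flag is genuine and nothing in your plan removes it. The results of \cite{DTW24a} are fixed-time bounds for distribution functions. They determine the mass that the pre-minimum measure at time $j$ puts on $(-x,0]$ only up to $O(j^{-1})$, which is not summable in $j$. Integrating against the post-minimum factor of size $(n-j)^{-1/2}$ gives errors of order $\sum_{j\le n}j^{-1}(n-j)^{-1/2}\asymp n^{-1/2}\log n$. This happens already for bounded $x$, where the target is $O(1/n)$. A concentration bound for the unconditioned $S_j$, combined with $\pr(\tau>j/2)$, again only yields local probabilities of order $j^{-1}$.

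\textbf{The missing ingredient.} What you need is a local bound for the killed walk conditioned at \emph{both} ends, Lemma~\ref{lem:concrete.bound.alternative}:
$\pr(x+S_n\in[y,y+1),\tau_x>n)\le C\,\E|X_1|^3\,\E|S_{\tau_x}|\,(y+\E|X_1|^3)/(n(x+\sqrt n))$.
It is proved by splitting at $n/2$ and reversing time in the second half. It gives $\pr(\tau_x=k)$ and $\E[|x+S_{\tau_x}|;\tau_x=k]$ of order $(\E|X_1|^3)^2\E|S_{\tau_x}|/(k(x+\sqrt k))$. The infinite mean of $\tau_x$ is handled by truncation: $\E[\tau_x\wedge n]\le C\E|X_1|^3\,n\E|S_{\tau_x}|/(x+\sqrt n)$.

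With these bounds, the first-exit decomposition you wrote down and then abandoned works for all $x$ at once.
\begin{itemize}
\item After smoothing by $U$, study $\pr(x+S_n+U\ge y,\tau_x>n)-\pr(x+S_n+U\le -y,\tau_x>n)$; the second term is $O(n^{-1/2}\pr(\tau_x>n/2))$.
\item The strong Markov property at $\tau_x=k$ produces the antisymmetric quantities $Q_{n-k}(y)=\pr(S_{n-k}+U\ge y)-\pr(S_{n-k}+U\le-y)$. These change with $k$ only by $O(k\E|X_1|^3/n^{3/2})$.
\item It also produces local terms $\pr(S_{n-k}+U\in(-y,-y+z]\cup[y,y+z))\approx 2ze^{-y^2/2n}/\sqrt{2\pi n}$, which generate the overshoot correction.
\end{itemize}
All errors then sum to the required order, with no logarithm.
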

Let $B_t$ denotes the standard Brownian motion. For every fixed $x$ we denote 
$$
\tau^{bm}_x:=\inf\{t>0:x+B_t\le0\}.
$$
Then, by the reflection principle for the Brownian motion,
\begin{align*}
\pr(x+B_t\ge y,\tau_x^{bm}>t)
&=\pr(x+B_t\ge y)-\pr(-x+B_t\ge y)\\
&=\Phi\left(\frac{y+x}{\sqrt{t}}\right)-\Phi\left(\frac{y-x}{\sqrt{t}}\right).
\end{align*}
Thus, the estimate \eqref{eq:BE1} can be seen as a corrected diffusion approximation for random walks conditioned to stay positive, the correction is given by the term $\frac{2}{\sqrt{2\sigma^2\pi n}} e^{-\frac{y^2}{2\sigma^2n}} \E|x+S_{\tau_x}|$. This term is bigger than the right hand side in \eqref{eq:BE1} in the case when $x=o(n^{1/2})$. To illustrate the effect of the correction we consider the probability $\pr(\tau_x>n)$. Putting $y=0$ in \eqref{eq:BE1}, we obtain 
\begin{align*}
&\left|\pr(\tau_x>n)-\left[\Phi\left(\frac{x}{\sigma\sqrt{n}}\right)
      -\Phi\left(\frac{-x}{\sigma\sqrt{n}}\right)\right]
         -\frac{2}{\sqrt{2\sigma^2\pi n}} \E|x+S_{\tau_x}|\right|\\
&\hspace{4cm}         
         \le  A_1\frac{(\E|X_1|^3)^3\E|S_{\tau_x}|}{\sigma^9 n}.
\end{align*}
If we assume now that $x=x_n\to\infty$ and $x_n=o(n^{1/2})$ then,
using the fact that $\lim_{x\to\infty}\E|x+S_{\tau_x}|=:E\in(0,\infty)$ under the assumption $\E|X_1|^3 < \infty$,
we conclude that 
$$
\left|\pr(\tau_x>n)-\left[\Phi\left(\frac{x}{\sigma\sqrt{n}}\right)
      -\Phi\left(\frac{-x}{\sigma\sqrt{n}}\right)\right]
         -\frac{2E}{\sqrt{2\sigma^2\pi n}} \right|
    =o(n^{-1/2}).
$$
This simple argument shows that Theorem~\ref{thm:BE} gives, for $x=o(\sqrt{n})$, a better result than the bound
\begin{align}
\label{eq:Ales-bound}
\sup_{x\ge 0}\left|\pr(\tau_x>n)-\left[\Phi\left(\frac{x}{\sigma\sqrt{n}}\right)
      -\Phi\left(\frac{-x}{\sigma\sqrt{n}}\right)\right]\right|
\le A\frac{\E|X_1|^3}{n^{1/2}},
\end{align}
which has been obtained in \cite{Aleshkyavichene73}. If $x=O(n^{3/4})$ then the result in Theorem~\ref{thm:BE} gives a better rate of convergence than the rate obtained very recently by Grama and Xiao~\cite{GX24}.

We notice also that our Theorem~\ref{thm:BE} can be seen as a complement to the corrected diffusion approximation obtained by Siegmund and Yuh in \cite{Siegmund-Yuh} in the case when $x=a\sqrt{n}$ with some $a>0$. They have obtained a short asymptotic expansions for the probability $\pr(x+S_n\ge y,\tau_x\le n)$, which can be transferred, by using classical expansions in the CLT, into expansions for $\pr(x+S_n\ge y,\tau_x>n)$.

It is well-known that if $x=o(\sqrt{n})$ then the distribution of $\frac{x+S_n}{\sigma\sqrt{n}}$ conditioned on $\tau_x>n$ converges towards the Rayleigh distribution. Theorem~\ref{thm:BE} allows one to obtain a rate of convergence in this limit theorem.
\begin{corollary} \label{cor}
Under the assumptions of Theorem~\ref{thm:BE}, there exist absolute constants $A_2, A_3$ such that, for all $x\le\sqrt{n}$,
\begin{align}\label{eq:BE2}
    \left|
        \pr(x+S_n\ge y|\tau_x>n)
   -
        e^{-\frac{y^2}{2\sigma^2n}}
    \right|  
\le
    A_2\frac{(\E|X_1|^3)^3}{\sigma^9\sqrt{n}}
     + A_3\frac{x^2}{\sigma^2 n}
\end{align}
and 
\begin{align}\label{eq:BE3}
    \left|
        \frac{\pr(\tau_x>n)}
        {\sqrt{\frac{2}{\sigma^2\pi}}\E|S_{\tau_x}|n^{-1/2}}
    -
        1
    \right|
\le
    A_2\frac{(\E|X_1|^3)^3}{\sigma^9\sqrt{n}}
     + A_3\frac{x^2}{\sigma^2 n}.
\end{align}
\end{corollary}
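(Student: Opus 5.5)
We derive Corollary~\ref{cor} directly from Theorem~\ref{thm:BE}. The plan is to Taylor-expand the Brownian reflection term for $x\le\sqrt n$, merge it with the correction term, and then divide by $\pr(\tau_x>n)$. Write $a:=\sigma\sqrt n$. Note that $x+S_{\tau_x}\le 0$, so $\E|x+S_{\tau_x}|=\E|S_{\tau_x}|-x$; indeed $|S_{\tau_x}|=x+|x+S_{\tau_x}|$.

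\textbf{Expansion of the reflection term.} We have
$$\Phi\Big(\tfrac{y+x}{a}\Big)-\Phi\Big(\tfrac{y-x}{a}\Big)=\frac{2x}{a}\ph(y/a)+R,$$
where $\ph$ is the standard normal density. Because $\ph'$ is odd in the shift, the second-order terms cancel, so $|R|\le C x^3/a^3$ uniformly in $y$. A sharper bound keeps a Gaussian factor, $|R|\le Cx^3a^{-3}\sup_{|u|\le x}\ph''((y+u)/a)$. Since $x\le\sqrt n$ and $\sigma$ may be small, one has to be careful here. If $x/a$ is large, say $x>a$, then $x^2/(\sigma^2 n)>1$ and the statement is trivial because both sides of \eqref{eq:BE2} are at most $1$ in absolute value; likewise \eqref{eq:BE3} can be handled crudely. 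So we may assume $x\le a$.

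\textbf{Combining with Theorem~\ref{thm:BE}.} Inserting the expansion into \eqref{eq:BE1} and using $\frac{2}{\sqrt{2\pi}a}e^{-y^2/2a^2}=\frac{2}{a}\ph(y/a)$, the $x$ contributions combine:
$$\frac{2x}{a}\ph(y/a)+\frac2a\ph(y/a)\big(\E|S_{\tau_x}|-x\big)=\frac{2\E|S_{\tau_x}|}{a}\ph(y/a).$$
Set $D:=(\E|X_1|^3)^3/\sigma^9$ and define $M:=\frac{2\E|S_{\tau_x}|}{a}\ph(0)=\sqrt{\frac{2}{\sigma^2\pi}}\E|S_{\tau_x}|n^{-1/2}$. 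Then
$$\pr(x+S_n\ge y,\tau_x>n)=M e^{-y^2/2a^2}+O\Big(\frac{D\,\E|S_{\tau_x}|}{n}\Big)+O\Big(\frac{x^3}{a^3}\Big).$$

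\textbf{Dividing by the tail.} Putting $y=0$ gives $\pr(\tau_x>n)=M\big(1+O(D/\sqrt n)+O(x^3/(a^3M))\big)$, using $\E|S_{\tau_x}|/n=O(M/\sqrt n)$ for the first error. For the last error, $\E|S_{\tau_x}|\ge x$ gives $M\ge cx/a$, hence $x^3/(a^3M)\le Cx^2/a^2=Cx^2/(\sigma^2n)$. This proves \eqref{eq:BE3}, up to absorbing the $\sigma$-dependence; a factor like $\sigma^{-1}$ can appear in $\E|S_{\tau_x}|$ bounds, but it only enters through the ratio, so it cancels. For \eqref{eq:BE2}, divide the general-$y$ identity by $\pr(\tau_x>n)$. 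When the relative errors are at most $1/2$, the ratio equals $e^{-y^2/2a^2}(1+O(\cdot))+O(\cdot)$ with the same error terms. Otherwise $D/\sqrt n+x^2/(\sigma^2n)\ge c$ and \eqref{eq:BE2} is trivial, since the left-hand side is at most $1$.

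\textbf{Main obstacle.} The delicate point is the case $x=0$ or $x$ very small, where $\E|S_{\tau_x}|$ could in principle be tiny compared with $a$. The lower bound $M\ge cx/a$ handles the $x^3$ term. For the term of order $D\E|S_{\tau_x}|/n$ divided by $M$, the factor $\E|S_{\tau_x}|$ cancels exactly, which is why Theorem~\ref{thm:BE} was stated with $\E|S_{\tau_x}|$ on the right. So the only check is that the Taylor remainder is bounded by $x^3/a^3$ without any lower bound on $\E|S_{\tau_x}|$ beyond $x$, and this is what the plan relies on.
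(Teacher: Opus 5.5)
Your proposal is correct and follows essentially the paper's own route. Both arguments Taylor-expand the Gaussian integral, so that the first-order term cancels and leaves an $x^3/n^{3/2}$ remainder; merge $x$ with $\E|x+S_{\tau_x}|$ into $\E|S_{\tau_x}|$; use $x\le\E|S_{\tau_x}|$ to write the remainder as $x^2\E|S_{\tau_x}|/n^{3/2}$; set $y=0$ for \eqref{eq:BE3}; and divide for \eqref{eq:BE2}. Your explicit case split (relative error at most $1/2$ versus the trivial case) only fills in what the paper summarizes as ``hence'', and the $\sigma$-bookkeeping you worry about is handled most cleanly by normalizing $\sigma=1$ via scaling, as the paper does.
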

This corollary implies uniform rate of convergence of order $n^{-1/2}$ when $x\le n^{1/4}$. The term
$\frac{x^2}{n}$  on the right hand sides of \eqref{eq:BE2} and \eqref{eq:BE3} is caused by the approximation of
$\Phi\left(\frac{y+x}{\sigma\sqrt{n}}\right)
-\Phi\left(\frac{y-x}{\sigma\sqrt{n}}\right)$ by
$\frac{2x}{\sqrt{2\sigma^2\pi n}}e^{-\frac{y^2}{2\sigma^2n}}$ and appears naturally in the asymptotic expansions for 
$\frac{\Phi\left(\frac{y+x}{\sigma\sqrt{n}}\right)
-\Phi\left(\frac{y-x}{\sigma\sqrt{n}}\right)}
{\Phi\left(\frac{x}{\sigma\sqrt{n}}\right)
-\Phi\left(\frac{-x}{\sigma\sqrt{n}}\right)}$ in the case when $x=o(\sqrt{n})$.

Comparing \eqref{eq:BE1} with the classical Berry-Esseen inequality \eqref{eq.berry-esseen.classical} and with Aleshkya-\\ vichene's
bound~\eqref{eq:Ales-bound}, we see that the right hand side of \eqref{eq:BE1} contains the third power of the Lyapunov ratio $\frac{\E|X_1|}{\sigma^3}$. We believe that the optimal bound should be linear in the Lyapunov ratio and that the third power is caused by our approach which uses the Berry-Esseen inequality to bound local probabilities, see \eqref{eq.conc} below. We next show that imposing some structural properties on the distribution of increments $\{X_k\}$
allows one to improve the bound in Theorem~\ref{thm:BE}.

%\textcolor{red}{TODO: Organisation of the paper.}
%%%%%%%%%%%%%%%%%%%%%%%%%%%%%%%%%%%%%%%%%%%%%%%%%%%%%%%%%%%%%%%%%%%%%%%%%%%%%%%%%%%%%%%%%%%%%%%%%%%%%%%%%%%%%%%%%%%%%%%%%%%%%%%%%%%%%%%%%%%%%%%%%%%%%%%%%%%%%%%%%%
\section{Preliminary estimates}
In what follows we shall assume, without loss of generality, that
$$
\sigma^2=1.
$$

We first extend upper bounds obtained in \cite{DTW24a} to the case of arbitrary starting point $x$.
Using the classical Berry-Esseen inequality~\eqref{eq.berry-esseen.classical}, we conclude that, uniformly in $x$ and $y$, 
\begin{align}
\nonumber 
  \pr(x+S_n\in [y,y+z]) 
  &\le 2\gamma_0\frac{\E|X_1|^3}{\sqrt n}
+\frac{1}{\sqrt{2\pi}}
\int_{(y-x)/\sqrt n}^{(y+z-x)/\sqrt n} e^{-z^2/2}dz 
\\\label{eq.conc}
&\le \frac{\gamma_1(z)}{\sqrt{2n}}, 
\end{align}
where $\gamma_1(z):=\sqrt 2 \E|X_1|^3+z \pi^{-1/2}$. 
This implies that 
\begin{align}
\label{sn.tau}
\nonumber
&\pr(x+S_n\in [y,y+z],\tau_x>n)\\
\nonumber
&\hspace{1cm}\le\int_0^\infty
\pr(x+S_{\lfloor n/2\rfloor}\in dw,\tau_x>n/2)
\pr(S_{n-\lfloor n/2\rfloor}\in [y-w,y-w+z])\\
&\hspace{1cm}\le
\frac{\gamma_1(z)}{\sqrt{n}}\pr(\tau_x>n/2).
\end{align}

\begin{lemma}\label{lem:tau-tail}
For all $n \ge  8(\E|X_1|^3)^2$ one has
\begin{equation}
\label{eq.tau2}
    \pr(\tau_x>n) 
\le
    6\frac{x+|\E[x+S_{\tau_x}]|}{x+\sqrt{n}}
=
    6\frac{\E|S_{\tau_x}|}{x+\sqrt{n}}.
\end{equation}

\end{lemma}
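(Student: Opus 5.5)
We will use optional stopping for the martingale $x+S_n$ at the bounded stopping time $\tau_x\wedge n$. First note the identity on the right of \eqref{eq.tau2}: by Wald's identity (valid since $\E\tau_x<\infty$ fails in general, we argue instead through the martingale itself), $\E[x+S_{\tau_x}]\le 0$ and $x+S_{\tau_x}\le 0$, so $|\E[x+S_{\tau_x}]|=\E|x+S_{\tau_x}|=\E(-x-S_{\tau_x})$. Hence $x+|\E[x+S_{\tau_x}]|=\E(-S_{\tau_x})=\E|S_{\tau_x}|$, since $S_{\tau_x}\le -x\le 0$. The finiteness of $\E|S_{\tau_x}|$ follows from $\E X_1^2<\infty$ via standard ladder-height arguments.

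For the inequality, apply optional stopping at $\tau_x\wedge n$:
$$
x=\E[x+S_{\tau_x\wedge n}]=\E[x+S_n;\tau_x>n]+\E[x+S_{\tau_x};\tau_x\le n].
$$
Since the last term is at least $\E[x+S_{\tau_x}]=-\E|x+S_{\tau_x}|$, because $x+S_{\tau_x}\le0$, we obtain
$$
\E[x+S_n;\tau_x>n]\le x+\E|x+S_{\tau_x}|=\E|S_{\tau_x}|.
$$
It therefore suffices to show that on $\{\tau_x>n\}$ the quantity $x+S_n$ is typically of order $x+\sqrt n$. That is, we need a lower bound
$$
\E[x+S_n;\tau_x>n]\ge c\,(x+\sqrt n)\,\pr(\tau_x>n)
$$
with $c=1/6$, or more robustly for a quantity comparable to $\pr(\tau_x>n)$.

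To get this lower bound, we work with $m=\lfloor n/2\rfloor$ and split according to the position at time $m$. Write
$$
\pr(\tau_x>n)\le \pr\bigl(x+S_m\le a\sqrt n,\ \tau_x>m,\ \tau_x>n\bigr)+\pr(x+S_m>a\sqrt n,\ \tau_x>m).
$$
The second term is at most $\E[x+S_m;\tau_x>m]/(a\sqrt n)\le \E|S_{\tau_x}|/(a\sqrt n)$ by Markov's inequality and the displayed bound above, applied with $m$ in place of $n$. For the first term, we use the Markov property at time $m$. From a point $w\le a\sqrt n$, the walk must avoid $(-\infty,0]$ for the remaining $n-m\ge n/2$ steps. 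By the Berry--Esseen inequality, the probability that $w+S_{n-m}>0$ is bounded away from $1$, say at most $\Phi(a\sqrt2)+2\gamma_0\E|X_1|^3/\sqrt{n/2}$. The assumption $n\ge 8(\E|X_1|^3)^2$ is exactly what keeps this error term small, at most about $0.48$. Hence the first term is at most $q\,\pr(\tau_x>m)$ with $q<1$.

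This alone gives only a recursion $\pr(\tau_x>n)\le q\pr(\tau_x>n/2)+\E|S_{\tau_x}|/(a\sqrt n)$, which needs iterating. A cleaner alternative, the one I would try first, works at the final time rather than the midpoint:
$$
\pr(\tau_x>n)\le \pr(x+S_n>b(x+\sqrt n),\tau_x>n)+\pr(x+S_n\le b(x+\sqrt n),\tau_x>n).
$$
The first piece is at most $\E|S_{\tau_x}|/(b(x+\sqrt n))$ by Markov. The second piece is bounded using \eqref{sn.tau} with $z=b(x+\sqrt n)$, giving at most $\frac{\gamma_1(z)}{\sqrt n}\pr(\tau_x>n/2)$. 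Iterating over dyadic times then closes a bound of the form $C\E|S_{\tau_x}|/(x+\sqrt n)$.

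The case $x\ge\sqrt n$ is trivial, since then $6\E|S_{\tau_x}|/(x+\sqrt n)\ge 6x/(2x)>1$. So we may assume $x<\sqrt n$, in which case $x+\sqrt n\asymp\sqrt n$.

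The main obstacle is bookkeeping the constants so that the final constant is $6$ and the condition $n\ge8(\E|X_1|^3)^2$ suffices, via a careful choice of $a$ and of the iteration. Closing the recursion will also require monotonicity of $\pr(\tau_x>k)$ in $k$ together with an induction on $n$.
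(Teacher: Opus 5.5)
Your optional-stopping step is correct and matches the paper: $\E[x+S_n;\tau_x>n]\le x+|\E[x+S_{\tau_x}]|=\E|S_{\tau_x}|$. The equality needs no Wald argument, only $x+S_{\tau_x}\le 0$. You also correctly identify what remains: $\E[x+S_n;\tau_x>n]\ge\frac16(x+\sqrt n)\pr(\tau_x>n)$. But that is the whole content of the lemma, and neither of your recursions proves it.

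Write $p_n=\pr(\tau_x>n)$. Both recursions have the form $p_n\le A/\sqrt n+q_n\,p_{\lfloor n/2\rfloor}$. Iterating gives $\frac{A}{\sqrt n}\sum_i(q\sqrt2)^i$ plus a remainder, so you need $q_n<1/\sqrt2$ all along the dyadic chain. In the midpoint version your own estimate gives $q\approx\Phi(a\sqrt2)+0.48>0.98$. In the second version, \eqref{sn.tau} gives $q_n=\sqrt2\,\E|X_1|^3/\sqrt n+b(x+\sqrt n)/\sqrt{\pi n}$. Its first term already equals $1/2$ at $n=8(\E|X_1|^3)^2$ and reaches $1$ after two halvings.

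So the induction must start at some $n_0\asymp(\E|X_1|^3)^2$. There you need $\pr(\tau_x>n_0)\le C\E|S_{\tau_x}|/(x+\sqrt{n_0})$, which is the lemma itself. The trivial bound $1$ is useless, because for $x=0$ the right-hand side can be of order $(\E|X_1|^3)^{-2}$. For example, take $X_1=-\varepsilon$ with probability close to $1$ and $X_1\approx 1/\varepsilon$ with probability $\approx\varepsilon^2$; then $\E|S_{\tau_0}|\approx\varepsilon$ and $\E|X_1|^3\approx 1/\varepsilon$. Monotonicity of $p_k$ does not fix this base-case problem.

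The missing idea is a positive-correlation inequality, which the paper takes from Lemma 25 of \cite{DSW18}:
\[
\pr(\tau_x>n)\,\E[(x+S_n)^+]\le\E[(x+S_n)^+;\tau_x>n]=\E[x+S_n;\tau_x>n].
\]
This is Harris' (FKG) inequality for the product law of $(X_1,\dots,X_n)$: both $\mathbb{1}_{\{\tau_x>n\}}$ and $(x+S_n)^+$ are nondecreasing in every $X_i$.

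With it no recursion is needed. You only have to bound the unconditioned quantity $\E[(x+S_n)^+]$ from below, and Berry--Esseen does this directly:
\[
\E[(x+S_n)^+]=\int_{-x}^\infty\pr(S_n>y)\,dy\ge x\,\pr(S_n>0)+\sqrt n\int_0^1\pr(S_n>t\sqrt n)\,dt\ge x\bigl(\tfrac12-\gamma_0\tfrac{\E|X_1|^3}{\sqrt n}\bigr)+0.341\sqrt n-\gamma_0\E|X_1|^3 .
\]
For $n\ge8(\E|X_1|^3)^2$ this is at least $\frac16(x+\sqrt n)$. That is where both the threshold and the constant $6$ come from.
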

\begin{proof}
Applying Lemma 25 in~\cite{DSW18} to the stopping time $\tau_x$, we conclude that
$$
\pr(\tau_x>n) \le \frac{\E[x+S_n;\tau_x>n]}{\E[(x+S_n)^+]}
$$
for every $x\ge0$. (As usual, $z^+$ denotes the positive part of $z$, i.e.
$z^+=\max\{z,0\}$.)

Applying the optional stopping theorem to the martingale $x+S_n$ with the stopping time $\tau_x\wedge n$, we infer that
$$
x = \E[x + S_n, \tau_x > n] + \E [x+S_{\tau_x}, \tau_x \le n]
$$
and hence
$$
    \E[x+S_n;\tau_x>n]
=
    x-\E[x+S_{\tau_x};\tau_x\le n]
\le 
    x+|\E[x+S_{\tau_x}]|.
$$
Consequently,
\begin{equation}\label{eq.tau}
    \pr(\tau_x>n) 
\le
    \frac{x+|\E[x+S_{\tau_x}]|}{\E[(x+S_n)^+]}.
\end{equation}
Using the classical Berry-Esseen bound \eqref{eq.berry-esseen.classical},
we obtain
\begin{align*}
    \E[(x+S_n)^+]
&=
    \int_0^\infty 
        \pr(x+S_n>y) dy
=
    \int_{-x}^\infty
        \pr(S_n>y) dy\\ 
&\ge x\pr(S_n>0)+\int_0^{\sqrt{n}}\pr(S_n>y)dy\\
&= x\pr(S_n>0)+
    \sqrt n
    \int_0^{1} 
    \pr(S_n>y\sqrt n) dy\\
&\ge x\left(\frac{1}{2}-\gamma_0\frac{\E|X_1|^3}{\sqrt{n}}\right)+
    \sqrt{n}
    \int_0^{1} 
        \overline \Phi(y)dy 
        -\gamma_0 \E|X_1|^3. 
\end{align*}
Recalling that $\gamma_0\le 0.4785$ and noticing that
$\int_0^{1}\overline \Phi(y)dy\ge 0.341$, we conclude that if
$n\ge 8(\E|X_1|^3)^2$ then 
$$
\E[(x+S_n)^+]\ge\frac{1}{6}(x+\sqrt{n}).
$$
This completes the proof of the lemma.
\end{proof}
\begin{lemma}
\label{lem:lorden}
There exists an absolute constant $C$ such that
\begin{align}\label{eq:2nd-moment}
    \E |u+S_{\tau_u}|^2
\le
    C \E|S_{\tau_u}| (\E |X_1|^3)^2,\quad u\ge0.
\end{align}
Furthermore,
\begin{align}
\label{eq:Mogul}
    \E |u+S_{\tau_u}| \le 3 \frac{\E |X_1|^3}{\E |X_1|^2}, \quad u\ge0.
\end{align}
\end{lemma}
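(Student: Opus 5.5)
The plan is to realise the undershoot $R_u:=-(u+S_{\tau_u})\ge 0$ as an overshoot in a renewal process, and then to control its first two moments through the Green function of the killed walk. Throughout we use the normalisation $\sigma^2=1$, so that $\E|X_1|^3\ge 1$ and $\E|S_{\tau_u}|=u+\E R_u$.

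\textbf{Bound \eqref{eq:Mogul}.} Let $\chi_1,\chi_2,\dots$ be the i.i.d. strict descending ladder heights, taken with positive sign and written $\chi$ generically. Then $\tau_u$ is the first time the renewal process $\chi_1+\dots+\chi_k$ exceeds $u$, and $R_u$ is the corresponding overshoot. Lorden's inequality gives, uniformly in $u$,
$$
\E R_u\le \frac{\E\chi^2}{\E\chi}.
$$
It then remains to bound $\E\chi^2$ and $\E\chi$. For $\E\chi$ I would use Spitzer's identity, which gives $\E\chi\,\E\chi^+=\sigma^2/2$, where $\chi^+$ is the ascending ladder height. For $\E\chi^2$ I would use the standard fact that the second moment of a ladder height is controlled by a third moment of $X$ (Doney; Chow--Lai). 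This yields $\E R_u\le c\,\E|X_1|^3/\sigma^2$. The explicit constant $3$ I would take directly from Mogulskii's inequality for the overshoot of a zero-mean walk, $\E R_u\le \sup_{t\ge0}\E[(X^-)^2;X^->t]/\E[X^-;X^->t]$, which is at most $3\E|X_1|^3/\E X_1^2$ after a short Chebyshev-type computation. I expect this part to be essentially a citation.

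\textbf{Bound \eqref{eq:2nd-moment}.} I would decompose according to the position just before the exit. Let
$$
G_u(dw):=\sum_{n\ge0}\pr(u+S_n\in dw,\tau_u>n)
$$
be the Green function of the walk killed at $\tau_u$. Then
$$
\E R_u^2=\int_0^\infty G_u(dw)\,\E[(X_1+w)^2;X_1\le -w].
$$
Cutting $(0,\infty)$ into unit intervals $[k,k+1)$, the claim reduces to a uniform-in-$k$ Green function bound
$$
G_u([k,k+1))\le C(\E|X_1|^3)^2\,\E|S_{\tau_u}|,
$$
combined with
$$
\sum_k \sup_{w\in[k,k+1)}\E[(X_1+w)^2;X_1\le-w]\le C\,\E|X_1|^3.
$$
The Green function bound would be proved by splitting the time sum at $N:=\lceil 8(\E|X_1|^3)^2\rceil$.
\begin{itemize}
\item For $n\le N$, I would use the concentration bound \eqref{eq.conc}, started from the point reached at a suitable earlier time, together with $\pr(\tau_u>\cdot)\le 1$. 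Together with the trivial bound $N$ this should cost at most $C(\E|X_1|^3)^2$, and $\E|S_{\tau_u}|\ge \E R_0>0$ lets us absorb it into the right-hand side. One must check that $\E R_0$ is bounded below in terms of $\sigma$ alone; I would get this from Spitzer's identity.
\item For $n>N$, I would split $[0,n]$ into three blocks. On the first block, staying positive costs $6\E|S_{\tau_u}|/(u+\sqrt n)$ by Lemma~\ref{lem:tau-tail}. The middle block gives the local factor $\gamma_1(1)/\sqrt n$ via \eqref{eq.conc}. The last block, read backwards from $w$, is a time-reversed walk that must stay above $0$; Lemma~\ref{lem:tau-tail} applied to $-X$ bounds this by $6\,\E|\widetilde S_{\tau_w}|/(w+\sqrt n)$.
\end{itemize}
The product of the three factors is summable in $n$.

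\textbf{Main obstacle.} The reversed factor grows like $\min(w,\sqrt n)$. Summed over $n$, this makes the Green bound grow like $\log(2+w)$ instead of being uniform in $w$, and a naive sum then produces $\E|X_1|^3\log|X_1|$ rather than $\E|X_1|^3$. To remove the logarithm, I would first try a renewal-theoretic bound. The killed Green function factorises through the ascending and descending ladder renewal measures (Wiener--Hopf), and the descending renewal measure of unit intervals is bounded by $C/\E\chi$ by Blackwell/Lorden-type uniform bounds. This gives $G_u([k,k+1))\le C\,U^+([0,u+1])/\E\chi$ for the ascending renewal function $U^+$. Since $U^+([0,u+1])\le C(1+u)/\E\chi^+$ and $1+u\le C\,\E|S_{\tau_u}|$, uniformity in $k$ follows, with the constants controlled via $\E|X_1|^3$. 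If that factorisation proves awkward to make quantitative, the fallback is to accept the weaker $(\E|X_1|^3)^2$ factor, as the statement allows: absorb the logarithmic loss by truncating $X_1$ at level $\E|X_1|^3$ and treating large jumps separately with Markov's inequality.
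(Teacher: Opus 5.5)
Your treatment of \eqref{eq:Mogul} matches the paper, which simply cites Theorem~2 of Mogulskii. Drop your paraphrase of that result, though: $\E[(X^-)^2;X^->t]\ge t\,\E[X^-;X^->t]$, so the supremum you wrote is infinite whenever $X^-$ is unbounded. Your first step for \eqref{eq:2nd-moment} is also the paper's: integrate $\E[(X+w)^2;X\le -w]$ against the killed Green function, and use $\sum_{w\ge0}\E[X^2;|X|\ge w]\le\E[X^2(|X|+1)]\le 2\E|X_1|^3$.

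There is a genuine gap after this. Your exponents do not add up. A Green bound of order $(\E|X_1|^3)^2\E|S_{\tau_u}|$, times a sum of order $\E|X_1|^3$, gives $(\E|X_1|^3)^3\E|S_{\tau_u}|$, not the claimed square. What is needed, and what the paper proves, is
\[
\sup_{w\ge0}\sum_{k\ge0}\pr(u+S_k\in[w,w+1),\tau_u>k)\le c_3\,\E|X_1|^3\,\E|S_{\tau_u}|.
\]
Your three-block route does not reach this; it fails already for $k\le N$. The count $N\asymp(\E|X_1|^3)^2$ carries no factor $\E|S_{\tau_u}|$, and $\E|S_{\tau_u}|\ge\E|S_{\tau_0}|$ is \emph{not} bounded below by an absolute constant.

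A counterexample: take $X_1\in\{-\epsilon,M\}$ with mean $0$ and variance $1$. Then $M\approx\epsilon^{-1}$, $\pr(X_1=M)\approx\epsilon^2$ and $\E|X_1|^3\approx\epsilon^{-1}$. Every undershoot is at most $\epsilon$, so $\E|S_{\tau_0}|\le\epsilon$. Meanwhile the Green mass of $[0,1)$ from $u=0$ is at least $1$. Spitzer's identity cannot rescue this, since here $\E\chi^+\approx(2\epsilon)^{-1}$. The same example shows that the factor $\E|X_1|^3$ in front of $\E|S_{\tau_u}|$ is unavoidable, so there is no slack for absorbing constants into $\E|S_{\tau_u}|$.

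Your renewal fallback points in the right direction, but its quantitative inputs are false as stated.
\begin{itemize}
\item The renewal measure of $[0,1]$ is at least $1$, so it cannot be $\le C/\E\chi$ when $\E\chi$ is large. The mirror example $X_1\in\{\epsilon,-M\}$ has $\E\chi\asymp\epsilon^{-1}$. Lorden's bound adds $\E\chi^2/(\E\chi)^2$, which costs further moments.
\item For the same reason, $U^+([0,u+1])\le C(1+u)/\E\chi^+$ fails.
\item The inequality $1+u\le C\,\E|S_{\tau_u}|$ fails for small $u$, by the first example.
\end{itemize}
More importantly, in your orientation the $u$-dependent factor is an ascending renewal function, which has no exact expression through $\E|S_{\tau_u}|$.

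The missing idea is to put the $u$-dependence on the descending side, where it is exact. Cut the path at the weak descending ladder epochs. Each of the $\theta(u)$ cycles visits $[w,w+1)$ at most $\sup_x\phi(x)$ times in expectation. Here $\phi(x)=H(x+1)-H(x)$ is the pre-$\tau_0$ occupation of unit intervals, and subadditivity with Corollary~8 of \cite{DTW24a} gives $\phi(x)\le H(1)\le 2\E|S_{\tau_0}|(1+c_2\E|X_1|^3)$. Wald's identity then gives exactly $\E\theta(u)=\E|S_{\tau_u}|/\E|S_{\tau_0}|$. The factor $\E|S_{\tau_0}|$ cancels, leaving $c_3\E|X_1|^3\E|S_{\tau_u}|$. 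This needs no renewal approximation and no lower bound on $\E|S_{\tau_0}|$.
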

\begin{proof}
The second inequality is proved by Mogulskii, see Theorem 2 in \cite{Mogulskii74}.

To prove the first bound we define
\begin{align*}
    \phi(x) 
:=
    \E 
    \Big[
        \sum_{k=0}^{\tau_0-1}
            \mathbb{1}_{\{S_k\in[x,x+1)\}}\Big]
               =\sum_{k=0}^\infty\pr(S_k\in[x,x+1),\tau_0>k).
\end{align*}
By (39) in \cite{VW09},
$$
\phi(x)=H(x+1)-H(x),
$$
where $H(x)$ denotes the renewal function corresponding to strict ascending ladder heights.  
By subadditivity of renewal function, $H(x+1)-H(x)\le H(1)$ for all $x$. Therefore,
$$
\sup_{x \in \R}\phi(x)=\phi(0)\le H(1).
$$
In \cite[Corollary 8]{DTW24a} we have shown that 
$$
H(x)\le 2\E|S_{\tau_0}|(x+c_2\E|X_1|^3),\quad x\ge0,
$$
where $c_2$ is an absolute constant.
Consequently,
\begin{equation}
\label{eq:phi-max} 
\sup_{x \in \R}\phi(x)\le 2(1+c_2)\E|X_1|^3\E|S_{\tau_0}|.
\end{equation}
Splitting the path of the walk into independent cycles by descending ladder epochs, we obtain
\begin{align*}
    &\sum_{k = 0}^\infty
        \pr( u + S_k \in [w,w+1), \tau_u > k)\\
&\hspace{1cm}=
    \E 
    \Big[ 
        \sum_{k=0}^{\tau_u-1}
            \mathbb{1}_{\{u + S_k \in [w,w+1)\}}
    \Big]
\\
&\hspace{1cm}=
    \phi(w-u)
+
    \E
    \Big[
        \sum_{j=1}^{\theta(u)-1}
            \phi(w-u + \chi_1^- + \dots + \chi_j^-)
    \Big],
\end{align*}
where $\theta(u):=\inf\{j\ge1: \chi^-_1+\ldots+\chi^-_j\ge u\}$ and $\chi_i^-$ is the $i$-th weak descending ladder height. Therefore, $\chi_i^-$ are independent copies of $|S_{\tau_0}|$. Combining this representation with \eqref{eq:phi-max} and noting that, due to the Wald identity, 
$$
\E|S_{\tau_0}|\E\theta(u)=\E|S_{\tau_u}|,
$$
we conclude that 
\begin{align} \label{eq:staumomentssetim}
    \sum_{k =0}^\infty 
        \pr( u + S_k \in [w,w+1), \tau_u > k)
\le
    \max_{x \in \R}\phi(x)\E \theta(u)
\le 
    c_3\E|X_1|^3\E|S_{\tau_u}|,
\end{align}
where $c_3$ is an absolute constant.

By the total probability law,
\begin{align*}
    \E |u+S_{\tau_u}|^{2}
&=
    \sum_{k=0}^{\infty}
    \int_0^\infty \pr(u+S_k\in dw,\tau_u>k)\E[(X+w)^2;X\le-w]\\
&\le
    \sum_{w=0}^\infty\sum_{k=0}^\infty 
    \pr(u+S_k\in [w,w+1),\tau_u>k)\E[X^2;X\le-w].
%     \\
% &=
%     \sum_{w=0}^\infty \phi(w)\E[X^2;X\le-w]. 
\end{align*}
Applying now \eqref{eq:staumomentssetim} we conclude that
\begin{align*}
    \E |u+S_{\tau_u}|^2
&\le
    c_3\E|X_1|^3\E|S_{\tau_u}|
    \sum_{w=0}^\infty \E[X^2;X\le-w]\\
&\le c_3\E|X_1|^3\E|S_{\tau_u}|
    \sum_{w=0}^\infty \E[X^2;|X|\ge w]\\
&\le c_3\E|X_1|^3\E|S_{\tau_u}|\E[X^2(|X|+1)]
\le 2c_3 (\E|X_1|^3)^2\E|S_{\tau_u}|, 
\end{align*}
using that $\E|X|^3\ge 1$ because of $\E[X^2]=1$. 
This completes the proof of the lemma.
\end{proof}

The next lemma provides an upper bound for conditional local probabilities and is the main difference to the approach used in \cite{DTW24a}. Lemma 7 there gives a similar bound for the particular case $x=0$ and is based on a representation for the local probabilities which follows from the Wiener-Hopf factorisation. Since the factorisation is not directly applicable to positive starting point $x$, we use a different, even simpler, approach based on the time reversal.

\begin{lemma}\label{lem:concrete.bound.alternative}
For all $n\ge  32 (\E|X_1|^3)+4$ one has 
\begin{align*}
    \pr(x+S_n \in [y,y+1), \tau_x > n)
    &\le
    288\gamma_1(1)
    \frac{\E|S_{\tau_x}|\left(y+4\E|X_1|^3\right)}{n(x+\sqrt{n})}\\
&\le
    288\gamma_1(1)
    \frac{\left(x+3\E|X_1|^3\right)\left(y+4\E|X_1|^3\right)}{n(x+\sqrt{n})}.
\end{align*}
\end{lemma}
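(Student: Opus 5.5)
Split the time interval into three pieces of lengths $m:=\lfloor n/4\rfloor$, then $n-2m$, then $m$, and use the Markov property at time $m$ and at time $n-m$. The first piece of length $m$ forces $\tau_x>m$, which costs $\pr(\tau_x>m)\le 6\E|S_{\tau_x}|/(x+\sqrt m)$ by Lemma~\ref{lem:tau-tail}. This is allowed because $m\ge 8(\E|X_1|^3)^2$ follows from the assumption on $n$; if it does not quite follow for small Lyapunov ratio, adjust the split, using $\E|X_1|^3\ge1$. The middle piece supplies a local factor $\gamma_1(1)/\sqrt{n}$ through the concentration bound \eqref{eq.conc}. The last piece of length $m$ supplies the factor $(y+4\E|X_1|^3)/\sqrt n$ through time reversal.

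Concretely, I would write
\[
\pr(x+S_n\in[y,y+1),\tau_x>n)\le \int \pr(x+S_m\in du,\tau_x>m)\,\pr\bigl(u+S_{n-m}\in[y,y+1),\ \min_{n-2m< k\le n-m}(u+S_k)>0\bigr),
\]
keeping only positivity on the last block. Time reversal on that block sets $X'_j:=-X_{n-m+1-j}$, so the reversed walk $S'_k$ starting from the endpoint $v\in[y,y+1)$ must satisfy $v+S'_k>0$ for $k<m$, i.e.\ $\tau'_{v}$ of the walk with increments $-X$ exceeds $m-1$. Conditioning on the increments of the last block, the middle block contributes at most $\sup_w\pr(S_{n-2m}\in[w,w+1))\le\gamma_1(1)/\sqrt{2(n-2m)}$ by \eqref{eq.conc}. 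Since the three blocks are independent, the whole quantity is bounded by
\[
\pr(\tau_x>m)\cdot \frac{\gamma_1(1)}{\sqrt{n-2m}}\cdot \sup_{v\in[y,y+1]}\pr(\tau'_{v}>m-1).
\]
For the last factor, apply Lemma~\ref{lem:tau-tail} to the reflected walk $-S$, which has the same variance and third moment. This gives $\pr(\tau'_v>m-1)\le 6\E|S'_{\tau'_v}|/(v+\sqrt{m-1})\le 6(v+3\E|X_1|^3)/\sqrt{m-1}$, where the second step uses $\E|S'_{\tau'_v}|\le v+\E|v+S'_{\tau'_v}|$ together with Mogulskii's bound \eqref{eq:Mogul}. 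Since $v\le y+1\le y+\E|X_1|^3$, this is at most $6(y+4\E|X_1|^3)/\sqrt{m-1}$.

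Multiplying the three factors gives $\frac{36\,\gamma_1(1)\E|S_{\tau_x}|(y+4\E|X_1|^3)}{(x+\sqrt m)\sqrt{n-2m}\sqrt{m-1}}$. With $m\approx n/4$ and $n\ge 32\E|X_1|^3+4$, one has $m-1\ge n/8$, $n-2m\ge n/2$ and $x+\sqrt m\ge (x+\sqrt n)/2$. The constant is then at most $36\cdot2\cdot\sqrt{8}\cdot\sqrt2\cdot\ldots$, which fits under $288$ after careful bookkeeping. The second inequality of the lemma follows from $\E|S_{\tau_x}|\le x+\E|x+S_{\tau_x}|\le x+3\E|X_1|^3$, again by \eqref{eq:Mogul}.

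The main obstacle is the time-reversal step. One has to justify that the event $\{x+S_n\in[y,y+1)\}$ together with positivity on the last block becomes a hitting event for the reversed walk started near $y$. Because the start point $v$ is random, the bound must be taken as a supremum over $v\in[y,y+1)$, and the factorization must be done carefully by conditioning on the last-block increments before integrating out the middle block. The other delicate point is checking that the stated threshold on $n$ gives the hypothesis $m-1\ge 8(\E|X_1|^3)^2$ of Lemma~\ref{lem:tau-tail}. If it does not, I would instead use the bound \eqref{eq.tau} directly, with a cruder lower bound on $\E(v+S_m)^+$ that only needs $m\gtrsim \E|X_1|^3$.
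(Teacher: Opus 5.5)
Your proposal is essentially the paper's argument. The paper produces the same three factors: $\pr(\tau_x>\cdot)$ from Lemma~\ref{lem:tau-tail}; a local factor $\gamma_1(1)/\sqrt{n}$ from \eqref{eq.conc}; and, after reversing time and using the monotonicity $v+S'_j\le y+1+S'_j$ to replace the random endpoint by the fixed start $y+1$, a factor $\pr(\tau'_{y+1}>\cdot)$. It then finishes with \eqref{eq:Mogul} as you do, organising this as two nested halvings (blocks of lengths about $n/2,n/4,n/4$) instead of your single $n/4,n/2,n/4$ split.

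Two small adjustments. First, read the hypothesis as $n\ge 32(\E|X_1|^3)^2+4$, as the paper's own proof does, so that $\lfloor n/4\rfloor\ge 8(\E|X_1|^3)^2$. Your proposed fallback cannot work, since $\E(S_m)^+\ge c\sqrt m$ genuinely needs $m\gtrsim(\E|X_1|^3)^2$. Second, keep positivity at all $m+1$ times of the last block, so that you obtain the deterministic event $\{\tau'_{y+1}>m\}$ rather than $\{\tau'_v>m-1\}$. This makes the factorisation immediate and avoids needing $m-1\ge 8(\E|X_1|^3)^2$.
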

\begin{proof}
Consider a random walk $\{S'_n\}_{n\ge 0}\overset{d}{=} \{- S_n\}_{n\ge 0}$. %such that $S'_n \overset{d}{=} - S_n$ for every $n$. 
Define 
\[
\tau'_y = \inf \{n:y+S'_n \le 0\}
\] and let $m = \lf n/2 \rf$.
Due to Lemma~\ref{lem:tau-tail}, for all $n \ge 16 (\E|X_1|^3)^2 + 2$ and all $x,y\ge0$, 
we have the bounds
\begin{align} \label{eq:tau2:corr0}
    \pr(\tau_x > m) 
\le
    6\frac{\E|S_{\tau_x}|}{x+\sqrt{m}}
\le
    12\frac{\E|S_{\tau_x}|}{x+\sqrt{n}} 
\end{align}
and
\begin{align}\label{eq:tau2:corr'0}
    \pr(\tau'_y > m) 
\le
    12\frac{\E|S'_{\tau'_y}|}{\sqrt{n}}. 
\end{align}
Applying the Markov property at time $m$, we obtain
\begin{multline*}
    \pr(y + S'_n \in [z,z+1), \tau'_y > n)\\
\le
    \int_0^\infty
        \pr(y+S'_m \in du, \tau'_y > m)
        \pr(S'_{n-m} \in [z-u,z-u+1) ).
\end{multline*}
Noting that $X_1'$ has zero mean and unit variance and $\E|X_1'|^3 = \E |X_1|^3$, we infer that \eqref{eq.conc} can be applied to
$\pr(S'_{n-m} \in [z-u,z-u+1) )$. As a result we have
\begin{align*}
    \pr(y + S'_n \in [z,z+1), \tau'_y > n)
&\le
    \pr(\tau'_y > m)\frac{\gamma_1(1)}{\sqrt{2(n-m)}}\\
&\le
    \pr(\tau'_y > m)\frac{\gamma_1(1)}{\sqrt{n}},
\end{align*}
where the last inequality holds since $2(n-m) \ge n$. Applying now \eqref{eq:tau2:corr'0}, one gets
\begin{align} \label{eq:conc.bound.weak0}
    \pr(y + S'_n \in [z,z+1), \tau'_y > n)
\le
    12\gamma_1(1)\frac{\E|S'_{\tau'_y}|}{n}.
\end{align}
Applying the same argument to the walk $\{S_n\}$ and using \eqref{eq:tau2:corr0} instead of \eqref{eq:tau2:corr'0}, we obtain 
\begin{align} \label{eq:conc.bound.weak00}
    \pr(x + S_n \in [z,z+1), \tau_x > n)
\le
    12\gamma_1(1)\frac{\E|S_{\tau_x}|}{\sqrt{n}(x+\sqrt{n})}.
\end{align}
Using once again the Markov property, we have
\begin{multline} \label{eq:lem:conc.bound.splitting0}
    \pr(x+S_n \in [y,y+1), \tau_x > n)\\
=
    \int_0^\infty
        \pr(x+S_m \in dz, \tau_x > m)
        \pr(z+S_{n-m} \in [y,y+1), \tau_z > n-m ).
\end{multline}
Reverting the time, one gets easily
\begin{align*}
    \pr(z+S_{k} \in [y,y+1), \tau_z > k ) 
\le
    \pr(y+1+S'_{k} \in [z,z+1), \tau'_{y+1} > k ),\quad k\ge1.
\end{align*}
Combining this bound with \eqref{eq:conc.bound.weak0}, we conclude that
\begin{align*}
    \pr(z+S_{n-m} \in [y,y+1), \tau_z > n-m )
\le &
    12\gamma_1(1)\frac{\E|S'_{\tau'_{y+1}}|}{n-m}\\
\le &
    24\gamma_1(1)\frac{\E|S'_{\tau'_{y+1}}|}{n}
\end{align*}
 for all $n\ge 32 (\E|X_1|^3)^2 + 4$.
Substituting this into \eqref{eq:lem:conc.bound.splitting0} and applying \eqref{eq:tau2:corr0}, we finally obtain
\begin{align*}
    \pr(x+S_n \in [y,y+1), \tau_x > n)
\le &
    24\gamma_1(1)\frac{\E|S'_{\tau'_{y+1}}|}{n}\pr(\tau_x > m)\\
\le &
    288\gamma_1(1)\frac{\E|S_{\tau_x}|\E|S'_{\tau'_{y+1}}|}{n(x+\sqrt{n})}.
\end{align*}
Using \eqref{eq:Mogul} and recalling that $\sigma^2=1$, we get 
\begin{align*}
    \E|S_{\tau_x}|\E|S'_{\tau'_{y+1}}|
    &\le \E|S_{\tau_x}|\left(y+1+3\E|X_1|^3\right)\\
    &\le \left(x+3\E|X_1|^3\right)\left(y+4\E|X_1|^3\right).
\end{align*}
This completes the proof of the lemma.
\end{proof}
\begin{lemma}
\label{lem:S_tau}
There exists an absolute constant $C_1$ such that, for all $k \ge 32 (\E|X_1|^3)^2+5$, one has the bounds
\begin{align*}
&   \pr(\tau_x=k)
\le 
    C_1\frac{\E|S_{\tau_x}|}{k(x+\sqrt{k})}(\E|X_1|^3)^2
% \le 
%     C_1\frac{x+3\E|X_1|^3}{k(x+\sqrt{k})}(\E|X_1|^3)^2
    ,\\
&   \E[|x+S_{\tau_x}|;\tau_x=k]
\le 
    C_1\frac{\E|S_{\tau_x}|}{k(x+\sqrt{k})}(\E|X_1|^3)^2
% \le 
%     C_1\frac{x+3\E|X_1|^3}{k(x+\sqrt{k})}(\E|X_1|^3)^2
    ,
\end{align*}
$$
    \E[\gamma_1(|x+S_{\tau_x}|);\tau_x=k]
\le 
    C_1\frac{\E|S_{\tau_x}|}{k(x+\sqrt{k})}(\E|X_1|^3)^3
% \le 
%     C_1\frac{x+3\E|X_1|^3}{k(x+\sqrt{k})}(\E|X_1|^3)^3
    .
$$
and
$$
    \E[|x+S_{\tau_x}|^2;\tau_x=k]
\le 
    C_1\frac{\E|S_{\tau_x}|}{\sqrt{k}(x+\sqrt{k})}(\E|X_1|^3)^2
% \le 
    % C_1\frac{x+3\E|X_1|^3}{\sqrt{k}(x+\sqrt{k})}(\E|X_1|^3)^2
    .
$$
\end{lemma}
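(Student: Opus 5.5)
We condition on the position of the walk at time $k-1$ and use the local bound of Lemma~\ref{lem:concrete.bound.alternative}. For every measurable $g\ge0$ on $[0,\infty)$ we have
\begin{align*}
\E[g(|x+S_{\tau_x}|);\tau_x=k]
=\int_0^\infty \pr(x+S_{k-1}\in dw,\tau_x>k-1)\,\E[g(|w+X|);X\le -w],
\end{align*}
where $X$ is an independent copy of $X_1$. We discretise $w$ into unit intervals $[j,j+1)$, $j\ge0$. On each such interval we use the monotonicity of $w\mapsto\E[h(|w+X|);X\le-w]$, where $h$ is $g$ or a suitable majorant of it. Since $|w+X|\le |X|-j$ on $\{X\le -w\}$ with $w\ge j$, the integral is bounded by
$$
\sum_{j=0}^\infty \pr(x+S_{k-1}\in[j,j+1),\tau_x>k-1)\,\E[g(|X|-j)^+ ;|X|\ge j].
$$
The time index is $k-1\ge 32(\E|X_1|^3)^2+4$, so Lemma~\ref{lem:concrete.bound.alternative} applies with $n=k-1$. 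It gives the local probability bound
$$
288\gamma_1(1)\frac{\E|S_{\tau_x}|(j+4\E|X_1|^3)}{(k-1)(x+\sqrt{k-1})},
$$
and this is at most a constant multiple of $\frac{\E|S_{\tau_x}|(j+4\E|X_1|^3)}{k(x+\sqrt k)}$.

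It then remains to bound the three sums $\sum_j (j+4\E|X_1|^3)\,\E[g(|X|-j);|X|\ge j]$:
\begin{itemize}
\item For $g\equiv1$ the sum is at most $\sum_j (j+4\E|X_1|^3)\pr(|X|\ge j)$. This is $O(\E X^2+\E|X|+\E|X_1|^3(1+\E|X|))=O(\E|X_1|^3)$, using $\E|X_1|^3\ge1$. With $\gamma_1(1)=O(\E|X_1|^3)$ this gives the first bound with $(\E|X_1|^3)^2$.
\item For $g(t)=t$ we use $\E[(|X|-j);|X|\ge j]\le \E[|X|;|X|\ge j]$. Summing against $j$ gives $O(\E|X|^3)$, and summing against the constant $4\E|X_1|^3$ gives $O(\E|X_1|^3\E X^2)$. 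This yields the second bound.
\item For $g=\gamma_1$ we have $\gamma_1(t)=\sqrt2\E|X_1|^3+t\pi^{-1/2}$. The third bound therefore follows by combining the first two, and the factor $\E|X_1|^3$ in front of the probability term raises the power to $(\E|X_1|^3)^3$.
\end{itemize}

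The second-moment bound is the delicate one. Here the analogous sum would involve $\sum_j j\,\E[X^2;|X|\ge j]\approx \E|X|^4/2$, which is not available. I would instead stop splitting at the level $j$ of order $\sqrt k$ and treat large and small $j$ differently.
\begin{itemize}
\item For $j\le\sqrt k$ we use Lemma~\ref{lem:concrete.bound.alternative} as before, together with $\sum_{j\le\sqrt k} j\,\E[X^2;|X|\ge j]\le \sqrt k\sum_j\E[X^2;|X|\ge j]\le \sqrt k\,\E[X^2(|X|+1)]$.
\item For $j>\sqrt k$ we use the cruder bound $\pr(x+S_{k-1}\ge\sqrt k,\tau_x>k-1)\le\pr(\tau_x>k-1)\le 12\E|S_{\tau_x}|/(x+\sqrt k)$ from Lemma~\ref{lem:tau-tail}. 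This is multiplied by $\sup_{w\ge\sqrt k}\E[(w+X)^2;X\le-w]\le \E[X^2;|X|\ge\sqrt k]\le \E|X|^3/\sqrt k$.
\end{itemize}
Both pieces are $O\bigl(\E|S_{\tau_x}|(\E|X_1|^3)^2/(\sqrt k(x+\sqrt k))\bigr)$. The small-$j$ piece gains its factor $\sqrt k$ against the $1/k$ from Lemma~\ref{lem:concrete.bound.alternative}; the large-$j$ piece gets its $1/\sqrt k$ directly.

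The main obstacle is this cutoff argument for the second moment. One has to check that no fourth moment sneaks in, and that the powers of $\E|X_1|^3$ stay at two. The remaining bounds are routine bookkeeping, using $\E|X_1|^3\ge1$ to absorb lower-order terms.
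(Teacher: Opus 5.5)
Your proposal is correct. For the first three bounds it follows the paper's proof. You condition on $x+S_{k-1}$, discretise into unit cells, apply Lemma~\ref{lem:concrete.bound.alternative} with $n=k-1$, and sum the weights $(j+4\E|X_1|^3)$ against the tails of $X$. The paper packages these cases as $\E[(a|x+S_{\tau_x}|+b);\tau_x=k]$, which is your $g$ with $g(t)=at+b$.

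The fourth bound is where your route differs. The paper does not use Lemma~\ref{lem:concrete.bound.alternative} there. It uses the $j$-uniform intermediate estimate \eqref{eq:conc.bound.weak00} from inside that lemma's proof:
$\pr(x+S_{k-1}\in[j,j+1),\tau_x>k-1)\le 12\gamma_1(1)\E|S_{\tau_x}|/(\sqrt{k-1}(x+\sqrt{k-1}))$.
The sum over $j$ then reduces to $\sum_j\E[X^2;X\le -j]\le\E[X^2(|X|+1)]\le 2\E|X_1|^3$, so the fourth-moment obstruction you identified never arises.

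Your cutoff at $j\approx\sqrt k$ recovers the same bound using only the stated conclusions of Lemmas~\ref{lem:tau-tail} and \ref{lem:concrete.bound.alternative}. Below the cutoff $j/k\le 1/\sqrt k$, so the $j$-dependent local bound is no worse than the uniform one. Above it, the tail $\E[X^2;|X|\ge\sqrt k]\le\E|X_1|^3/\sqrt k$ compensates for the crude factor $\pr(\tau_x>k-1)$, and that piece carries only one power of $\E|X_1|^3$. The paper's version is a one-liner. Yours avoids reaching into the proof of Lemma~\ref{lem:concrete.bound.alternative}, at the cost of the extra split.

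One bookkeeping point in your small-$j$ piece. The constant part of the weight contributes $4\gamma_1(1)\E|X_1|^3\sum_{j\le\sqrt k}\E[X^2;|X|\ge j]$, and $\E|X_1|^3\ge1$ alone does not absorb it. Either bound the sum by $\sqrt k+1$, since each tail is at most $\E X^2=1$, or absorb it using $k\ge 32(\E|X_1|^3)^2$.
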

\begin{proof}
Fix some $a,b\ge 0$ and consider the expected value $\E[(a|x+S_{\tau_x}|+b);\tau=k]$. By the total probability law,
\begin{align*}
&\E[(a|x+S_{\tau_x}|+b);\tau_x=k]\\
&\hspace{1cm}
\le \int_0^\infty \pr(x+S_{k-1}\in dy,\tau_x>k-1)\E[(-aX+b);X\le-y]\\
&\hspace{1cm}
\le\sum_{j=0}^\infty \pr(x+S_{k-1}\in [j,j+1),\tau_x>k-1] \E[(-aX+b);X\le-j]).
\end{align*}
Applying now Lemma \ref{lem:concrete.bound.alternative} with $n=k-1$, we get 
\begin{align*}
&\E[(a|x+S_{\tau_x}|+b);\tau_x=k]\\
&\hspace{1cm}\le 288\gamma_1(1)\frac{\E|S_{\tau_x}|}{(k-1)(x+\sqrt{k-1})}
\sum_{j=0}^\infty \left(j+4\E|X_1|^3\right)\E[(-aX+b);X\le-j])\\
&\hspace{1cm}
\le 815\gamma_1(1)\frac{\E|S_{\tau_x}|}{k(x+\sqrt{k})}
\E\left[(-aX+b)\sum_{j\in[0,-X]}\left(j+4\E|X_1|^3\right);X\le 0\right]\\
&\hspace{1cm}
\le 815\gamma_1(1)\frac{\E|S_{\tau_x}|}{k(x+\sqrt{k})}
\E\left[(-aX+b)(-X+1)\left(\frac{-X}{2}+4\E|X_1|^3\right);X\le 0\right].
\end{align*}
Taking here $a=0$ and $b=1$, we get 
$$
\pr(\tau_x=k)\le 815\gamma_1(1)\frac{\E|S_{\tau_x}|}{k(x+\sqrt{k})}
\E\left[(-X+1)\left(\frac{-X}{2}+4\E|X_1|^3\right);X\le0\right]
$$
Using next the Jensen inequality and noting that $\sigma^2=1$ implies that $\E|X_1|^3\ge1$, we conclude that 
$$
\pr(\tau_x=k)\le C_1\frac{\E|S_{\tau_x}|}{k(x+\sqrt{k})}(\E|X_1|^3)^2.
$$
Choosing $a=1$ and $b=0$ and applying once again the Jensen inequality, we infer that the same inequality is valid for $\E[|x+S_{\tau_x}|;\tau=k]$.
Further, taking $a=\E|X_1|^3$ and $b=\pi^{-1/2}$, we obtain the third claim.

Finally,
\begin{align*}
 &\E[|x+S_{\tau_x}|^2;\tau_x=k]\\
 &\hspace{1cm}
 =\int_0^\infty\pr(x+S_{k-1}\in dy,\tau_x>k-1)\E[(-X+y)^2;X\le-y]\\
 &\hspace{1cm}
 \le\sum_{j=0}^{\infty}\pr(x+S_{k-1}\in [j,j+1),\tau_x>k-1)\E[(-X+j)^2;X\le-j].
\end{align*}
Applying now \eqref{eq:conc.bound.weak00}, we get 
\begin{align*}
    \E[|x+S_{\tau_x}|^2;\tau_x=k]
&\le 
    12\gamma_1(1)\frac{\E|S_{\tau_x}|}{\sqrt{k-1}(x+\sqrt{k-1})}
    \sum_{j=0}^\infty\E[(-X+j)^2;X\le-j]\\
&\le
    24\gamma_1(1)\frac{\E|S_{\tau_x}|}{\sqrt{k}(x+\sqrt{k})}\E|X_1|^3.
\end{align*}
This finishes the proof of the lemma.
\end{proof}
\begin{lemma}\label{lem:tau-exp}
For all $n \ge  8(\E|X_1|^3)^2$ one has
\begin{equation}
\label{eq:tr.exp}
\sum_{k=1}^n k\pr(\tau_x=k)
\le\E[\tau_x\wedge n]\le C\E|X_1|^3\frac{n\E|S_{\tau_x}|}{x+\sqrt{n}}.
\end{equation}
\end{lemma}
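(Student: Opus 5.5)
The first inequality holds trivially, since $\sum_{k=1}^n k\pr(\tau_x=k)=\E[\tau_x;\tau_x\le n]\le\E[\tau_x\wedge n]$. For the second, write
$$
\E[\tau_x\wedge n]=\sum_{k=0}^{n-1}\pr(\tau_x>k).
$$
The plan is to bound the tail probabilities using Lemma~\ref{lem:tau-tail} for large $k$, and to use the trivial bound $\pr(\tau_x>k)\le 1$ for small $k$. Set $k_0:=\lceil 8(\E|X_1|^3)^2\rceil$. For $k\ge k_0$, Lemma~\ref{lem:tau-tail} gives $\pr(\tau_x>k)\le 6\E|S_{\tau_x}|/(x+\sqrt{k})$. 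Summing over $k_0\le k<n$ yields
$$
6\E|S_{\tau_x}|\sum_{k<n}\frac{1}{x+\sqrt k}\le 6\E|S_{\tau_x}|\int_0^n\frac{dt}{x+\sqrt t}.
$$
We then check that $\int_0^n\frac{dt}{x+\sqrt t}\le \frac{2n}{x+\sqrt n}$. For $x\le\sqrt n$ the integral is at most $2\sqrt n\le 4n/(x+\sqrt n)$. For $x>\sqrt n$ it is at most $n/x\le 2n/(x+\sqrt n)$. Either way this contribution is $O\bigl(n\E|S_{\tau_x}|/(x+\sqrt n)\bigr)$, which is fine because $\E|X_1|^3\ge1$.

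The small indices $k<k_0$ contribute at most $k_0\le 9(\E|X_1|^3)^2$, and this must be compared with the target $\E|X_1|^3\, n\E|S_{\tau_x}|/(x+\sqrt n)$. Two lower bounds make the comparison work. First, $\E|S_{\tau_x}|\ge \max(x,\E|S_{\tau_0}|)$, and $\E|S_{\tau_0}|$ is bounded below by an absolute constant when $\sigma=1$. One way to see the latter is via Spitzer/Wald: $\E S_{\tau_0}^2$-type identities give $\E|S_{\tau_0}|\,\E H\ge c$, where $H$ is the ascending ladder height; alternatively, Lemma~\ref{lem:tau-tail} at $x=0$ combined with $\pr(\tau_0>n)\ge c/\sqrt n$. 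Second, $n/(x+\sqrt n)\ge \sqrt n/2$ when $x\le\sqrt n$, while $n\E|S_{\tau_x}|/(x+\sqrt n)\ge nx/(2x)=n/2$ when $x>\sqrt n$.

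Combining these, the right-hand side is at least $c\,\E|X_1|^3\min(\sqrt n,n)\ge c\,\E|X_1|^3\sqrt{n}$. Since $n\ge 8(\E|X_1|^3)^2$, we have $\sqrt n\ge 2\sqrt2\,\E|X_1|^3$, so the right-hand side is at least $c'(\E|X_1|^3)^2$, which absorbs the contribution $k_0$ after enlarging $C$.

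The main obstacle is the uniform lower bound $\E|S_{\tau_0}|\ge c>0$ for $\sigma^2=1$ in the case $x$ small. The first thing to try is Lemma~\ref{lem:tau-tail} combined with a lower bound $\pr(\tau_0>n)\ge \pr(\tau_0>n')$. A cleaner route is the identity $\E[\tau_0\wedge n]$ versus $\E[(S_n)^+]$: since $\E[S_n;\tau_0>n]=-\E[S_{\tau_0};\tau_0\le n]\le \E|S_{\tau_0}|$ and $\E[S_n;\tau_0>n]\ge$ something of order a constant for $n$ of order $(\E|X_1|^3)^2$, one gets $\E|S_{\tau_0}|\ge c$. If that is delicate, the known identity $\E|S_{\tau_0}|\E S_{\tau_0^+}=\sigma^2/2$, together with the bound $\E S_{\tau^+}\le 3\E|X_1|^3$ from Lemma~\ref{lem:lorden} applied to the reflected walk, gives $\E|S_{\tau_0}|\ge 1/(6\E|X_1|^3)$. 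This costs one extra power of $\E|X_1|^3$, and a careful recount shows it is still absorbed: $k_0\lesssim(\E|X_1|^3)^2\le \E|X_1|^3\sqrt n\cdot\E|X_1|^3\E|S_{\tau_0}|\cdot 6/(2\sqrt2)$ requires $\sqrt n\gtrsim \E|X_1|^3$, which holds.
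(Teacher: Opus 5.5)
\textbf{There is a genuine gap in your treatment of the indices $k<k_0$.} There you use $\pr(\tau_x>k)\le 1$ and pay $k_0\approx 8(\E|X_1|^3)^2$. For $x\le\sqrt n$, the right-hand side of \eqref{eq:tr.exp} is only of order $C\E|X_1|^3\sqrt n\,\E|S_{\tau_x}|$. So absorbing $k_0$ when $n\asymp(\E|X_1|^3)^2$ requires $\E|S_{\tau_x}|\ge c$ with an absolute constant $c$.

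Your claim that $\E|S_{\tau_0}|$ is bounded below by an absolute constant when $\sigma=1$ is false. Take $X_1=M$ with probability $1/(M^2+1)$ and $X_1=-1/M$ otherwise. Then $\E X_1=0$, $\E X_1^2=1$ and $\E|X_1|^3\sim M$. But all downward steps have size $1/M$, so $|S_{\tau_0}|\le 1/M$. With $x=0$ and $n=\lceil 8(\E|X_1|^3)^2\rceil\asymp M^2$, the right-hand side of \eqref{eq:tr.exp} is $\asymp CM$, while your bound for the first $k_0$ terms is $\asymp M^2$. No absolute $C$ closes your chain. The lemma itself still holds in this example, since $\E[\tau_0\wedge n]\le 1+n/(M^2+1)=O(1)$. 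Since the claim is false, none of the routes you sketch for it can work.

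Your fallback $\E|S_{\tau_0}|\ge 1/(6\E|X_1|^3)$ is valid and of the right order. (To get it you must pass from the weak to the strict ascending ladder height by letting $u\downarrow 0$ in \eqref{eq:Mogul} for the reflected walk and using Fatou.) However, the recount is off by one factor. With this bound the right-hand side is only guaranteed to be $\ge c\sqrt n$, not $\ge c\,\E|X_1|^3\sqrt n$. Your expression $\E|X_1|^3\sqrt n\cdot\E|X_1|^3\E|S_{\tau_0}|$ contains one factor $\E|X_1|^3$ too many. Absorbing $k_0$ would then require $\sqrt n\gtrsim(\E|X_1|^3)^2$. What your argument actually proves is \eqref{eq:tr.exp} with $(\E|X_1|^3)^2$ in place of $\E|X_1|^3$. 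That weaker bound would happen to suffice for the only use of the lemma in \eqref{eq:Q-sum}, but it is not the stated lemma.

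\textbf{The missing ingredient} is a bound on $\pr(\tau_x>k)$ for small $k$ that carries the factor $\E|S_{\tau_x}|$. One repair within your framework:
\begin{itemize}
\item The inequality $\pr(\tau_x>k)\le \E|S_{\tau_x}|/\E[(x+S_k)^+]$ from the proof of Lemma~\ref{lem:tau-tail} holds for every $k$.
\item For $1\le k\le k_0$, H\"older and \eqref{eq:3-moment} give $\E[(x+S_k)^+]\ge \frac12\E|S_k|\ge (\E S_k^2)^2/(2\E|S_k|^3)\ge ck/\E|X_1|^3$.
\item Summing over these $k$ gives $O(\E|X_1|^3\E|S_{\tau_x}|\log k_0)$, which is absorbed.
\item The single term $k=0$ is handled by $\E|S_{\tau_x}|\ge\E|S_{\tau_0}|\ge 1/(6\E|X_1|^3)$.
\end{itemize}

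The paper avoids the issue differently. For $x\le\sqrt n$ it uses Wald's second identity $\E[\tau_x\wedge n]=\E[S^2_{\tau_x\wedge n}]$ and bounds $\E|x+S_{\tau_x}|^2$ by Lemma~\ref{lem:lorden}, which already contains $\E|S_{\tau_x}|$. It then controls $\E[S_n^2;\tau_x>n]$ by a Doney--Jones truncation argument, which returns a term proportional to $\pr(\tau_x>n)$ plus at most $\frac12\E[\tau_x\wedge n]$. In that route every term carries $\E|S_{\tau_x}|$ naturally, so no lower bound on $\E|S_{\tau_x}|$ is ever needed.
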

\begin{proof}
Let us first show that the desired inequality holds in the case $x\ge\sqrt{n}$. Indeed, in this case, noting that $\E|S_{\tau_x}|\ge x$, we have
$$
\E[\tau_x\wedge n]\le n\le n\frac{\E|S_{\tau_x}|}{x}
\le 2n\frac{\E|S_{\tau_x}|}{x+\sqrt{n}}.
$$
Noting that $\sigma^2=1$ implies that $\E|X_1|^3>1$, we conclude that
if $x\ge\sqrt{n}$ then \eqref{eq:tr.exp} holds with $C=2$.

We now assume that $x\ge \sqrt{n}$. Applying the optional stopping theorem to the martingale $S_n^2-n$, we get
\begin{align*}
\E[\tau_x\wedge n]
=\E[S^2_{\tau_x\wedge n}]
    &=\E[S_{\tau_x}^2;\tau_x\le n]+\E[S_n^2;\tau_x>n]\\
    &\le \E[S_{\tau_x}^2]+\E[S_n^2;\tau_x>n]\\
    &\le 2x^2+2\E[|x+S_{\tau_x}|^2]+\E[S_n^2;\tau_x>n].
\end{align*}
By the assumptions $x\le \sqrt{n}$, $n\ge 8(\E|X_1|^3)^2$ and by Lemma~\ref{lem:lorden},
\begin{align*}
2x^2+2\E[|x+S_{\tau_x}|^2]
&\le 4n\frac{\E|S_{\tau_x}|}{x+\sqrt{n}} 
+C_0(\E|X_1|^3)^2\E|S_{\tau_x}|\\
&\le C_1\E|X_1|^3 n\frac{\E|S_{\tau_x}|}{x+\sqrt{n}}.
\end{align*}
Thus, it remains to show that
\begin{align}
 \label{eq:nl1}
 \E[S_n^2;\tau_x>n]\le 
 C\E|X_1|^3 n\frac{\E|S_{\tau_x}|}{x+\sqrt{n}}+\frac{1}{2}\E[\tau_x\wedge n].
\end{align}
To this end we first estimate the probability $\pr(S_n\ge y,\tau_x>n)$. We shall follow the strategy of the proof of Lemma~1.2 in   Doney and Jones~\cite{JD2012} and define two auxiliary stopping times
$$
T_y:=\inf\{k\ge1: S_k\ge y/2\}
\quad\text{and}\quad 
\eta_y:=\inf\{k\ge1: X_k\ge y/4\}.
$$
Then we have
\begin{align}
\label{eq:pbound1}
\nonumber
\pr(S_n\ge y,\tau_x>n)
&= \pr(S_n\ge y,\eta_y> n,\tau_x>n)+\pr(S_n\ge y,\eta_y\le n,\tau_x>n)\\
&\le \pr(S_n\ge y,\eta_y> n,\tau_x>n)+\pr(\eta_y\le n,\tau_x>n).
\end{align}
We also have
\begin{align}
\label{eq:pbound2}    
\pr(\eta_y\le n,\tau_x>n)
\le \sum_{k=1}^n\pr(\tau_x>k-1)\pr(X>y/4)=\E[\tau_x\wedge n]\pr(X>y/4).
\end{align}
Noting that $S_{T_y}< 3y/4$ on the event $\{\eta_y>n\}$, we obtain 
\begin{align*}
 \pr(S_n\ge y,\eta_y> n,\tau_x>n)
 &\le \sum_{k=0}^{n-1}\pr(\tau_x>k,T_y=k)\pr(S_{n-k}>y/4)\\
 &\le \max_{k\le n}\pr(S_{n-k}>y/4)\sum_{k=0}^{n-1}\pr(\tau_x>k,T_y=k).
\end{align*}
If $y\ge \sqrt{8n}$ then, due to the Doob inequality,
\begin{align*}
\pr(\tau_x>n)&\ge \sum_{k=0}^{n-1}\pr(\tau_x>k,T_y=k)
\pr\left(\min_{j\le n-k}S_j\le-y/2\right)\\
&\ge \frac{1}{2}\sum_{k=0}^{n-1}\pr(\tau_x>k,T_y=k).
\end{align*}
Consequently,
\begin{align*}
 \pr(S_n\ge y,\eta_y> n,\tau_x>n)
 \le 2\pr(\tau_x>n)\pr\left(\max_{k\le n}S_{k}\ge y/4\right).
\end{align*}
Using once again the Doob inequality, we conclude that 
\begin{align}
 \label{eq:pbound3}
\pr(S_n\ge y,\eta_y> n,\tau_x>n)
 \le 2^7\pr(\tau_x>n)\frac{\E|S_n|^3}{y^3}.
\end{align}
Plugging \eqref{eq:pbound2} and \eqref{eq:pbound3} into \eqref{eq:pbound1}, we conclude that 
\begin{align*}
 \pr(S_n\ge y,\tau_x>n)
 \le 2^7\pr(\tau_x>n)\frac{\E|S_n|^3}{y^3}+\E[\tau_x\wedge n]\pr(X>y/4)
\end{align*}
for all $y\ge \sqrt{8n}$. This implies that 
\begin{align*}
\E[S_n^2;\tau_x>n]
&\le 8n\pr(\tau_x>n)+\E[S_n^2;S_n>\sqrt{8n},\tau_x>n]\\
&\le C\pr(\tau_x>n)\left(n+\frac{\E|S_n|^3}{\sqrt{n}}\right)
+\E[\tau_x\wedge n]\E[X^2;X>\sqrt{n/2}].
\end{align*}
According to Theorem~2 in \cite{NP77},
\begin{align}
\label{eq:3-moment}
\E|S_n|^3\le C(\E|X_1|^3n+n^{3/2}).
\end{align}
Combining this inequality with \eqref{eq.tau2}
and noting that, by the Markov inequality,
$$
\E\left[X^2;X>\sqrt{n/2}\right]\le\sqrt{\frac{2}{n}}\E|X_1|^3\le \frac{1}{2}
$$
for $n\ge 8(\E|X_1|^3)^2$, we conclude that \eqref{eq:nl1} holds.
Thus, the proof is complete.
\end{proof}

Besides the bound for the truncated expectation of $\tau_x$, we shall need estimates for some truncated moments of $x+S_{\tau_x}$, which will be proved in subsequent lemmata.
\begin{lemma}
\label{lem:S_tau-1}
For all $n$ and $x$ we have
$$
\E[|x+S_{\tau_x}|;\tau_x\le n]
\le 8\frac{\sqrt{n}}{x+\sqrt{n}}\E[|S_{\tau_x}|].
$$
\end{lemma}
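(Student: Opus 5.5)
The plan is to use the identity $|S_{\tau_x}| = x + |x+S_{\tau_x}|$, valid because $x+S_{\tau_x}\le 0$. It gives $\E|S_{\tau_x}| = x+\E|x+S_{\tau_x}|$, and in particular $\E|S_{\tau_x}|\ge x$ and $\E|S_{\tau_x}|\ge \E|x+S_{\tau_x}|$. Then I split into the cases $x\le\sqrt n$ and $x>\sqrt n$. No restriction on $n$ should be needed, since neither case uses Lemma~\ref{lem:tau-tail} or the Berry--Esseen bound.

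\emph{Case $x\le\sqrt n$.} Here the bound is trivial. We have $\frac{\sqrt n}{x+\sqrt n}\ge \frac12$, so
$$
\E[|x+S_{\tau_x}|;\tau_x\le n]\le \E|x+S_{\tau_x}|\le \E|S_{\tau_x}|\le 2\frac{\sqrt n}{x+\sqrt n}\E|S_{\tau_x}|.
$$

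\emph{Case $x>\sqrt n$.} Here the truncation matters, and I use the optional stopping argument from the proof of Lemma~\ref{lem:tau-tail}. The stopping time $\tau_x\wedge n$ is bounded, so applying optional stopping to the martingale $x+S_k$ gives
$$
x=\E[x+S_n;\tau_x>n]+\E[x+S_{\tau_x};\tau_x\le n].
$$
Since $x+S_{\tau_x}\le0$, this rearranges to
$$
\E[|x+S_{\tau_x}|;\tau_x\le n]=\E[x+S_n;\tau_x>n]-x = \E[S_n;\tau_x>n]-x\pr(\tau_x\le n)\le \E[S_n^+].
$$
Then $\E[S_n^+]=\frac12\E|S_n|\le\frac12\sqrt n$ by Cauchy--Schwarz and $\sigma^2=1$. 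Finally, $x>\sqrt n$ gives $\frac{x}{x+\sqrt n}\ge\frac12$, and together with $\E|S_{\tau_x}|\ge x$ this yields
$$
\frac12\sqrt n\le \sqrt n\,\frac{x}{x+\sqrt n}\le \frac{\sqrt n}{x+\sqrt n}\E|S_{\tau_x}|.
$$
This is well within the constant $8$.

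The only step that is not routine is the second case. There one must not bound the truncated expectation by $\E|x+S_{\tau_x}|$, which is of order $\E|X_1|^3$ and not small relative to $\sqrt n$ uniformly. Instead, the optional stopping identity converts the truncated overshoot into $\E[S_n;\tau_x>n]$, which is controlled by $\sqrt n$ for every starting point. Everything else is elementary.
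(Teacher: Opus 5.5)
Your proof is correct, and it differs from the paper's in the case $x>\sqrt n$. The case $x\le\sqrt n$ is the same as in the paper.

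\textbf{The paper's route.} For $x\ge\sqrt n$ the paper bounds the larger quantity $\E[|S_{\tau_x}|;\tau_x\le n]$. It restricts to the event $\{\max_{k\le n}|S_k|\ge x\}$ and applies Markov's inequality followed by Doob's $L^2$ maximal inequality, which gives the bound $4n/x$. It then uses $x\ge\sqrt n$ and $\E|S_{\tau_x}|\ge x$ to arrive at the constant $8$.

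\textbf{Your route.} You reuse the optional stopping identity from Lemma~\ref{lem:tau-tail}. It rewrites the truncated overshoot exactly as $\E[x+S_n;\tau_x>n]-x$. This gives $\E[|x+S_{\tau_x}|;\tau_x\le n]\le \E S_n^+=\frac12\E|S_n|\le\frac{\sqrt n}{2}$.

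\textbf{What each buys.}
Your argument:
\begin{itemize}
\item needs only the martingale property and Cauchy--Schwarz, with no maximal inequality;
\item yields the constant $1$ instead of $8$;
\item gives an intermediate bound valid for every $x\ge 0$, so the case split is only needed to compare with the right-hand side.
\end{itemize}
The paper's bound controls $\E[|S_{\tau_x}|;\tau_x\le n]$, so it also covers the term $x\pr(\tau_x\le n)$. It decays like $n/x$ as $x\to\infty$, which is more than the lemma needs. Your bound stays of order $\sqrt n$ for all $x$. That suffices here, because for $x\ge\sqrt n$ the target $8\frac{\sqrt n}{x+\sqrt n}\E|S_{\tau_x}|$ is at least $4\sqrt n$.

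Your identity also recovers decay in $x$ with no extra effort. Optional stopping also gives $\E[x+S_{\tau_x};\tau_x\le n]=\E[x+S_n;\tau_x\le n]$. Hence $\E[|x+S_{\tau_x}|;\tau_x\le n]\le\E[(x+S_n)^-]\le n/x$, again without Doob's inequality.
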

\begin{proof}
If $x\le \sqrt{n}$ then the inequality is immediate from 
$$
\E[|x+S_{\tau_x}|;\tau_x\le n]\le \E[|x+S_{\tau_x}|]\le \E[|S_{\tau_x}|].
$$
In the case $x\ge\sqrt{n}$, using the Markov and Doob inequalities, we have 
\begin{align*}
\E[|x+S_{\tau_x}|;\tau_x\le n]
&\le \E[|S_{\tau_x}|;\tau_x\le n]\\
&\le \E\left[\max_{k\le n}|S_k|;\max_{k\le n}|S_k|\ge x\right]\\
&\le \frac{1}{x}\E\left[\max_{k\le n}|S_k|^2\right]
\le\frac{4}{x}\E[S_n^2]=\frac{4n}{x}\le 8\frac{x\sqrt{n}}{x+\sqrt{n}}.
\end{align*}
Combining this with the observation $|S_{\tau_x}|\ge x$, we complete the proof.
\end{proof}
\begin{lemma}
\label{lem:S_tau-2}
There exists an absolute constant $C$ such that
$$
\sum_{k=1}^n k\E[|x+S_{\tau_x}|;\tau_x=k]
\le C (\E|X_1|^3)^2\frac{n}{x+\sqrt{n}}\E[|S_{\tau_x}|]
$$
for all $n\ge 32(\E|X_1|^3)^2+5$.
\end{lemma}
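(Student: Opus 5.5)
We split the sum at the threshold $k_0:=32(\E|X_1|^3)^2+5$ from Lemma~\ref{lem:S_tau}. For large $k$ that lemma gives a decay of order $1/(k(x+\sqrt{k}))$ for each term, and for small $k$ we use a crude bound via Lemma~\ref{lem:S_tau-1}.

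\emph{Small $k$.} For $k< k_0$ we write $k\le k_0$ and get
\[
\sum_{k<k_0} k\,\E[|x+S_{\tau_x}|;\tau_x=k]\le k_0\,\E[|x+S_{\tau_x}|;\tau_x\le k_0].
\]
Since $n\ge k_0$, Lemma~\ref{lem:S_tau-1} applied with $n$ gives
\[
\E[|x+S_{\tau_x}|;\tau_x\le k_0]\le 8\frac{\sqrt n}{x+\sqrt n}\E|S_{\tau_x}|.
\]
Using $k_0\le C(\E|X_1|^3)^2$ and $\sqrt n\le n$, this part is bounded by $C(\E|X_1|^3)^2\frac{n}{x+\sqrt n}\E|S_{\tau_x}|$, which is already of the required form.

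\emph{Large $k$.} For $k_0\le k\le n$, the second inequality of Lemma~\ref{lem:S_tau} gives
\[
k\,\E[|x+S_{\tau_x}|;\tau_x=k]\le C_1(\E|X_1|^3)^2\frac{\E|S_{\tau_x}|}{x+\sqrt k}.
\]
It then remains to bound $\sum_{k\le n}\frac{1}{x+\sqrt k}$ by $C\frac{n}{x+\sqrt n}$. We check this in two cases.
\begin{itemize}
\item If $x\ge\sqrt n$, then each term is at most $1/x\le 2/(x+\sqrt n)$, so the sum is at most $2n/(x+\sqrt n)$.
\item If $x<\sqrt n$, then each term is at most $k^{-1/2}$, so the sum is at most $2\sqrt n\le 4n/(x+\sqrt n)$.
\end{itemize}
Adding the two parts yields the claim.

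The only point needing care is the small-$k$ range, where Lemma~\ref{lem:S_tau} is not available. Bounding $\E[\,\cdot\,;\tau_x\le k_0]$ by $\E|S_{\tau_x}|$ alone would lose the factor $1/(x+\sqrt n)$ when $x$ is large. Lemma~\ref{lem:S_tau-1} supplies exactly this missing factor, and the price is only the harmless loss $\sqrt n\le n$.
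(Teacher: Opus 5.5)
Your proof is correct and follows the paper's argument essentially step for step. You split the sum at $32(\E|X_1|^3)^2+5$, control the small-$k$ part by $k\le 37(\E|X_1|^3)^2$ together with Lemma~\ref{lem:S_tau-1}, and control the large-$k$ part by the second bound of Lemma~\ref{lem:S_tau} combined with $\sum_{k\le n}(x+\sqrt{k})^{-1}\le Cn/(x+\sqrt{n})$; your two-case check of that last sum, which the paper only asserts, is also correct.
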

\begin{proof}
We start by noting that 
\begin{align*}
&\sum_{k=1}^{\lf 32(\E|X_1|^3)^2+5 \rf} k\E[|x+S_{\tau_x}|;\tau_x=k]\\
&\hspace{2cm}
\le (32(\E|X_1|^3)^2+5)\E[|x+S_{\tau_x}|;\tau_x\le 32(\E|X_1|^3)^2+5]\\
&\hspace{2cm}
\le 37(\E|X_1|^3)^2 \E[|x+S_{\tau_x}|;\tau_x\le n].
\end{align*}
Taking into account Lemma~\ref{lem:S_tau-1}, we conclude that
\begin{align}
 \label{eq:S_tau-1.1}
 \nonumber
 \sum_{k=1}^{\lf 32(\E|X_1|^3)^2+5 \rf} k\E[|x+S_{\tau_x}|;\tau_x=k]
 &\le C_1 (\E|X_1|^3)^2 \frac{\sqrt{n}}{x+\sqrt{n}}\E[|S_{\tau_x}|]\\
 &\le C_1\E|X_1|^3\frac{n}{x+\sqrt{n}}\E[|S_{\tau_x}|]
\end{align}
for all $n\ge 32(\E|X_1|^3)^2+5$.
Furthermore, the second bound in Lemma~\ref{lem:S_tau} leads to 
\begin{align*}
\sum_{\lf 32(\E|X_1|^3)^2+5\rf +1}^n k\E[|x+S_{\tau_x}|;\tau_x=k]
&\le C_2(\E|X_1|^3)^2\E[|S_{\tau_x}|]\sum_{k=1}^n\frac{1}{x+\sqrt{k}}\\
&\le C_3(\E|X_1|^3)^2\E[|S_{\tau_x}|]\frac{n}{x+\sqrt{n}}.
\end{align*}
Combining this with \eqref{eq:S_tau-1.1}, we get the desired estimate. 
\end{proof}
\begin{lemma}
\label{lem:S_tau-3}    
There exists an absolute constant $C$ such that 
$$
\E[|x+S_{\tau_x}|^2;\tau_x\le n]
\le C(\E|X_1|^3)^2\frac{\sqrt{n}}{x+\sqrt{n}}\E[|S_{\tau_x}|]
$$
for all $n\ge 32(\E|X_1|^3)^2+5$.
\end{lemma}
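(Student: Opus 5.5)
We split according to whether $x\le\sqrt{n}$ or $x>\sqrt{n}$, since the factor $\frac{\sqrt{n}}{x+\sqrt{n}}$ is comparable to $1$ in the first case and to $\sqrt{n}/x$ in the second.

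\emph{Case $x\le\sqrt{n}$.} Here $\frac{\sqrt{n}}{x+\sqrt{n}}\ge\frac12$, so it suffices to show $\E[|x+S_{\tau_x}|^2]\le C(\E|X_1|^3)^2\E|S_{\tau_x}|$. This is exactly the first bound \eqref{eq:2nd-moment} of Lemma~\ref{lem:lorden}, applied with $u=x$, after dropping the restriction $\tau_x\le n$.

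\emph{Case $x>\sqrt{n}$.} Now we need the smaller bound $C(\E|X_1|^3)^2\frac{\sqrt{n}}{x}\E|S_{\tau_x}|$. We split the event $\{\tau_x\le n\}$ into $\{\tau_x\le k_0\}$ and $\{k_0<\tau_x\le n\}$, where $k_0:=\lf 32(\E|X_1|^3)^2+5\rf$.
\begin{itemize}
\item For $k_0<k\le n$ we use the last bound of Lemma~\ref{lem:S_tau}, which gives $\E[|x+S_{\tau_x}|^2;\tau_x=k]\le C_1(\E|X_1|^3)^2\frac{\E|S_{\tau_x}|}{\sqrt{k}(x+\sqrt{k})}$. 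Since $x+\sqrt{k}\ge x$ and $\sum_{k\le n}k^{-1/2}\le 2\sqrt{n}$, summing over $k$ yields $2C_1(\E|X_1|^3)^2\frac{\sqrt{n}}{x}\E|S_{\tau_x}|$. This is of the required form because $\frac1x\le\frac{2}{x+\sqrt n}$ when $x>\sqrt{n}$.
\item For $k\le k_0$ we argue as in the proof of Lemma~\ref{lem:S_tau}, using the total probability law:
\[
\E[|x+S_{\tau_x}|^2;\tau_x\le k_0]\le\sum_{k\le k_0}\sum_{j\ge0}\pr(x+S_{k-1}\in[j,j+1),\tau_x>k-1)\,\E[(j-X)^2;X\le -j].
\]
Lemma~\ref{lem:concrete.bound.alternative} is not available for these small $k$. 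Instead we bound the double sum using that the walk must travel distance about $x$ before an overshoot can occur.
\end{itemize}

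A cleaner route for the small-$k$ part is to reduce to a Doob/Markov bound on the running maximum, as in Lemma~\ref{lem:S_tau-1}. On $\{\tau_x\le k_0\}$ write $|x+S_{\tau_x}|\le |X_{\tau_x}|$. Then
\[
\E[|x+S_{\tau_x}|^2;\tau_x\le k_0]\le \sum_{k\le k_0}\E\bigl[X_k^2;\ \min_{i<k}S_i>-x,\ S_k\le -x\bigr].
\]
On this event, either $\max_{i<k}|S_i|\ge x/2$ or $X_k\le -x/2$. The first alternative uses independence of $X_k$ from the past together with Doob's inequality, and contributes at most $\sum_{k\le k_0}\E X^2\cdot\frac{4(k-1)}{x^2}\le \frac{Ck_0^2}{x^2}$. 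The second contributes at most $k_0\E[X^2;|X|\ge x/2]\le \frac{2k_0\E|X_1|^3}{x}$.

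With $k_0\asymp(\E|X_1|^3)^2$ and $x>\sqrt{n}\ge\sqrt{k_0}$, both contributions should be bounded by $C(\E|X_1|^3)^2\frac{\sqrt n}{x}\E|S_{\tau_x}|$, using $\E|S_{\tau_x}|\ge x$. For the first term, $\frac{k_0^2}{x^2}\le\frac{k_0\sqrt{n}}{x}\cdot\frac{k_0}{x\sqrt{n}}$, and $\frac{k_0}{x\sqrt n}\le1$ since $x\sqrt{n}>n\ge k_0$. For the second, $\frac{k_0\E|X_1|^3}{x}\le \frac{k_0\E|X_1|^3}{x^2}\cdot x$, and $\frac{\E|X_1|^3}{x}\le \frac{\E|X_1|^3}{\sqrt{n}}\le1$.

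The main obstacle is exactly this small-$k$ range in the case $x>\sqrt n$, where the concentration bounds of Lemma~\ref{lem:S_tau} are unavailable. The bookkeeping of powers of $\E|X_1|^3$ there must still produce at most $(\E|X_1|^3)^2$. If the factor from the first term turns out to be a higher power, we would fall back on the renewal-type bound \eqref{eq:staumomentssetim}, restricted to levels $w$ near the boundary, to control $k_0$ more efficiently.
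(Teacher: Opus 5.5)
Your proof is correct and essentially matches the paper's: Lemma~\ref{lem:lorden} handles $x\le\sqrt n$, and for $x>\sqrt n$ the paper also splits at $k_0$ and sums the last bound of Lemma~\ref{lem:S_tau} over $k_0<k\le n$. The only minor difference is the range $\tau_x\le k_0$, where the paper simply uses $|x+S_{\tau_x}|\le|S_{\tau_x}|\le\max_{k\le k_0}|S_k|$ and Doob's inequality to get $4k_0\le 148(\E|X_1|^3)^2$, absorbed because $\frac{\sqrt n}{x+\sqrt n}\E|S_{\tau_x}|\ge\frac{\sqrt n}{2}$ for $x>\sqrt n$. Your split into a large running maximum versus a large jump also works, so your closing hedge about falling back on \eqref{eq:staumomentssetim} is unnecessary: your own bookkeeping already closes.
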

\begin{proof}
In the case $x\le\sqrt{n}$ we apply \eqref{eq:2nd-moment} to get 
\begin{align*}
\E[|x+S_{\tau_x}|^2;\tau_x\le n]
&\le \E|x+S_{\tau_x}|^2
\le C(\E|X_1|^3)^2\E|S_{\tau_x}|\\
&\le C_1(\E|X_1|^3)^2\frac{\sqrt{n}}{x+\sqrt{n}}\E|S_{\tau_x}|.
\end{align*}
Assume now that $x>\sqrt{n}$.
Similar to the proof of the Lemma~\ref{lem:S_tau-1},
\begin{align*}
\E[|x+S_{\tau_x}|^2;\tau_x\le 32(\E|X_1|^3)^2+5]
&\le\E\left[\max_{k\le 32(\E|X_1|^3)^2+5}|S_k|^2\right]\\
&\le 148(\E|X_1|^3)^2
\le 296(\E|X_1|^3)^2\frac{1}{x+\sqrt{n}}\E|S_{\tau_x}|\\
&\le 296\E|X_1|^3\frac{\sqrt{n}}{x+\sqrt{n}}\E|S_{\tau_x}|.
\end{align*}
Furthermore, using the last bound in Lemma~\ref{lem:S_tau}, we get 
\begin{align*}
\E[|x+S_{\tau_x}|^2;\tau_x\in( 32(\E|X_1|^3)^2+5,n]]
&\le 
C_1(\E|X_1|^3)^2\E|S_{\tau_x}|\sum_{k=1}^n\frac{1}{\sqrt{k}(x+\sqrt{k})}\\
&\le 
C_1(\E|X_1|^3)^2\E|S_{\tau_x}|\frac{1}{x}\sum_{k=1}^n\frac{1}{\sqrt{k}}\\
&\le C_1(\E|X_1|^3)^2\E|S_{\tau_x}|\frac{\sqrt{n}}{x+\sqrt{n}}.
\end{align*}
This completes the proof.
\end{proof}
%%%%%%%%%%%%%%%%%%%%%%%%%%%%%%%%%%%%%%%%%%%%%%%%%%%%%%%%%%%%%%%%%%%%%%%%%%%%%%%%%%%%%%%%%%%%%%%%%%%%%%%%%%%%%%%%%%%%%%%%%%%%%%%%%%%%%%%%%%%%%%%%%%%%%%%%%%%%%%%%%%
\section{Proof of Theorem~\ref{thm:BE}}
As in the previous section,  we shall always assume that $\sigma^2=1$.

The strategy of the proof of Theorem~\ref{thm:BE} is the same in the proof of the main result in \cite{DTW24a}. As in this paper, we shall use a smoothening with a random variable $U$ which has the density 
$$
g_A(x)= \frac{3}{\pi A} \left(\frac{1-\cos (Ax)}{Ax^2}\right)^2, 
\quad x\in \R,
$$
where  $A = (8\E |X_1|^3)^{-1}$.

The first step in our proof is a comparison of probabilities 
$\pr(x + S_n\ge y,\tau_x>n)$ and $\pr(x+S_n+U\ge y, \tau_x>n)$.

% Using the classical inequality ???, we conclude that, uniformly in $x$, 
% \begin{align}
% \nonumber 
%     \pr(S_n\in [y,y+z]) 
% &\le 
%     2\gamma_0\frac{\E|X_1|^{2+\delta}}{n^{\delta/2}}
% +
%     \frac{1}{\sqrt{2\pi}}
%     \int_{y/\sqrt n}^{(y+z)/\sqrt n} 
%         e^{-t^2/2}dt 
% \\\label{eq.conc}
% &\le 
%     \frac{\E|X_1|^{2+\delta}}{n^{\delta/2}} + \frac{z}{\sqrt{n}}, 
% \end{align}
% This implies that 
% \begin{align}
% \label{sn.tau}
% \nonumber
% &
%     \pr(x+S_n\in [y,y+z],\tau_x>n)\\
% \nonumber
% &\hspace{1cm}
% \le
%     \int_0^\infty
%         \pr(x+S_{\lfloor n/2\rfloor}\in dw,\tau_x>n/2)
%         \pr(S_{n-\lfloor n/2\rfloor}\in [y-w,y-w+z])\\
% &\hspace{1cm}
% \le
%     \left( 
%         \frac{\E|X_1|^{2+\delta}}{n^{\delta/2}} 
%     +
%         \frac{z}{\sqrt{n}}
%     \right)
%     \pr(\tau_x>n/2).
% \end{align}
% \begin{remark}
% The appearance of the third moment on the right hand side of \eqref{eq.conc} is the only reason for the third power of the Lyapunov ratio on the right hand side of \eqref{eq:BE1}. If one manages to replace $\E|X_1|^3$ by an absolute constant then one obtains \eqref{eq:BE1} with 
% $\frac{\E|X_1|^3}{\sigma^3}$ instead of its third power.
% \hfill$\diamond$
% \end{remark}

\begin{lemma}
\label{lem:smoothing}
For all $n\ge1$ and all $x,y\ge 0$ we have 
\begin{align}\label{eq:smooth.1}
% \nonumber
&
    \left|
        \pr(x+S_n+U \ge y, \tau_x >n )  
    -
        \pr(x+S_n \ge y,\tau_x>n)
    \right|
%     \\
% &\hspace{2cm}
\le 
    \frac{\gamma_1(\E |U|)}{\sqrt{n}}
    \pr(\tau_x>n/2)
% \frac{\gamma_1(\E|U|)}{\sqrt{n}}\pr(\tau>n/2)
\end{align}
and
\begin{equation}\label{eq:smooth.2}
    \pr(x+S_n+U \le -y, \tau_x >n )  
\le 
    \frac{\gamma_1(\E |U|)}{\sqrt{n}}
    \pr(\tau_x>n/2).
    % \frac{\gamma_1(\E|U|)}{\sqrt{n}}\pr(\tau>n/2). 
\end{equation}
\end{lemma}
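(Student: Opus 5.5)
Both bounds follow by conditioning on $U$, which is independent of the walk, and then applying the local estimate \eqref{sn.tau} with a window of random length $|U|$; integrating over $U$ afterwards will produce $\gamma_1(\E|U|)$.

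For \eqref{eq:smooth.1}, fix $U=u$. The two events $\{x+S_n\ge y-u\}$ and $\{x+S_n\ge y\}$, both intersected with $\{\tau_x>n\}$, differ by the event that $x+S_n$ lies between $y-|u|$ and $y+|u|$, more precisely in $[y-u,y)$ if $u>0$ and in $[y,y-u)$ if $u<0$. In either case this is an interval of length $|u|$, so
\begin{align*}
&\left|\pr(x+S_n+u\ge y,\tau_x>n)-\pr(x+S_n\ge y,\tau_x>n)\right|\\
&\hspace{2cm}\le \pr(x+S_n\in[y-|u|,y+|u|]\cap I_u,\tau_x>n)
\le \frac{\gamma_1(|u|)}{\sqrt n}\pr(\tau_x>n/2),
\end{align*}
where $I_u$ is the relevant half-interval of length $|u|$ and the last step is \eqref{sn.tau}. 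That bound is valid uniformly in the left endpoint, including negative ones. To be safe, I would check that the derivation of \eqref{sn.tau} via \eqref{eq.conc} does not need $y\ge0$; it does not, since \eqref{eq.conc} holds for all real $y$. Integrating over the law of $U$ and using that $\gamma_1$ is affine, $\E\gamma_1(|U|)=\gamma_1(\E|U|)$, which gives \eqref{eq:smooth.1}.

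For \eqref{eq:smooth.2}, on $\{\tau_x>n\}$ we have $x+S_n>0$. Hence $\{x+S_n+U\le -y\}$ forces $U\le -y$ and $0<x+S_n\le -y-U\le |U|$. Conditionally on $U=u$, the probability is therefore at most $\pr(x+S_n\in[0,|u|],\tau_x>n)\le \frac{\gamma_1(|u|)}{\sqrt n}\pr(\tau_x>n/2)$ by \eqref{sn.tau}, and integrating in $u$ gives the claim.

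The argument has essentially no obstacle. The only care needed is the sign bookkeeping for $u$ in the first part, together with noting that $\E|U|<\infty$. The density $g_A$ decays like $x^{-4}$, so $U$ has a finite first moment and the expectation of the affine function $\gamma_1$ is legitimate.
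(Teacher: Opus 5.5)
Your proposal is correct and follows essentially the same route as the paper. You condition on the independent smoothing variable $U$, and you observe that the difference of the two probabilities, or the event $\{x+S_n+U\le -y,\tau_x>n\}$, forces $x+S_n$ into an interval of length at most $|U|$; then you apply \eqref{sn.tau} and integrate, using that $\gamma_1$ is affine. The only cosmetic difference is the case $u>y$: you apply \eqref{sn.tau} with a negative left endpoint, which is legitimate because \eqref{eq.conc} is uniform in $y\in\R$, whereas the paper truncates the interval to $(0,y)$ using $x+S_n>0$ on $\{\tau_x>n\}$ and splits the integral according to the sign of $U$.
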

The proof of this lemma is almost a verbatim repetition of Lemma~4 in~\cite{DTW24a}
and we give it just to be self-contained.
\begin{proof}
Using~\eqref{sn.tau}, we obtain  
\begin{align*}
&\int_0^{\infty} 
    \pr(U\in -dz) 
    \left|
        \pr(x+S_n-z \ge y, \tau_x >n )  
    -
        \pr(x+S_n \ge y,\tau_x>n)
    \right|\\
&\hspace{1cm}=
    \int_0^{\infty}
        \pr(U\in -dz) 
        \pr(x+S_n\in [y,y+z),\tau_x>n)\\
&\hspace{1cm}\le 
    \int_0^{\infty} 
        \pr(U\in -dz) 
        \frac{\sqrt 2 \E|X_1|^3+\pi^{-1/2}z }{\sqrt{n}}\pr(\tau_x>n/2)\\
&\hspace{1cm}= 
    \frac{\sqrt{2} \E|X_1|^3 \pr(U < 0)+\pi^{-1/2}\E U^- }{\sqrt{n}}
\pr(\tau_x>n/2)
\end{align*}
and
\begin{align*}
&\int_0^{\infty} 
    \pr(U\in dz) 
    \left|
        \pr(x+S_n+z \ge y, \tau_x >n )  
    -
        \pr(x+S_n\ge y,\tau_x>n)\right|\\
&\hspace{1cm}
=
    \int_0^{y} 
        \pr(U\in dz) 
        \pr(x+S_n\in [y-z,y),\tau_x>n)
        +
        \pr(U>y)
        \pr(x+S_n\in (0,y),\tau_x>n)\\
&\hspace{1cm}
\le 
    \int_0^{y} 
        \pr(U\in dz)
        \frac{\sqrt 2 \E|X_1|^3+\pi^{-1/2}z}{\sqrt{n}}
        \pr(\tau_x>n/2)\\
&\hspace{2cm}
+
    \pr(U>y)
    \frac{\sqrt 2 \E|X_1|^3+\pi^{-1/2} y}{\sqrt{n}}
    \pr(\tau_x>n/2)\\
&\hspace{1cm}
\le 
    \frac{\sqrt 2 \E|X_1|^3 \pr(U>0)+\pi^{-1/2} \E U^+}{\sqrt{n}}
    \pr(\tau_x>n/2).
\end{align*}
Combining these two inequalities we obtain~\eqref{eq:smooth.1}. 

The second claim follows again from \eqref{sn.tau}:
\begin{align*}
    &\pr(x+S_n+U \le -y, \tau_x >n)\\
&\hspace{2cm}=
    \int_y^\infty 
        \pr(U\in -dz)
        \pr(x+S_n\in(0, z-y], \tau_x >n)\\
&\hspace{2cm}\le 
    \int_y^\infty 
        \pr(U\in -dz)
        \frac{\sqrt 2 \E|X_1|^3+\pi^{-1/2} (z-y)}{\sqrt{n}}
        \pr(\tau_x>n/2)\\
&\hspace{2cm}\le 
    \frac{\sqrt 2 \E|X_1|^3 \pr(U<0)+\pi^{-1/2} \E U^-}{\sqrt{n}}
    \pr(\tau_x>n/2). 
\end{align*}
Thus, the proof of the lemma is complete.
\end{proof}
% \begin{remark}
% Since we know the exact formula for $\E|U|$, we have 
% $$
% \gamma_1(\E|U|)=\left(\frac{48\log2}{\pi^{3/2}}+\sqrt{2}\right)
% \E|X_1|^3.
% $$
% This absolute constant in front of $\E|X_1|^3$ is smaller than $7.39$.\hfill$\diamond$
% \end{remark}
By the total probability law,
\begin{align*}
    &\pr(x+S_n+U \ge y, \tau_x >n ) \\
&\hspace{1cm}=
    \pr(x+S_n+U \ge y) 
-
    \pr(x+ S_n+U \ge y,\tau_x\le n)\\
&\hspace{1cm}=
    \pr(S_n+U \ge y-x) \\
&\hspace{2cm}-
    \sum_{k=1}^{n}
    \int_{0}^\infty
        \pr(\tau_x = k, x+S_k \in -dz) \pr(S_{n-k}+U \ge y+z)\\
&\hspace{1cm}=
    \pr(S_n+U \ge y-x)
-
    \sum_{k=1}^{n}
        \pr(\tau_x = k) \pr(S_{n-k}+U\ge y)\\
&\hspace{2cm}+
    \sum_{k=1}^{n}
    \int_{0}^\infty
        \pr(\tau_x = k, x+S_k \in -dz) 
        \pr\big(S_{n-k}+U\in [y, y+z) \big)
\end{align*}
and
\begin{align*}
&\pr(x+S_n+U \le -y,\tau_x>n)\\
&\hspace{1cm}=
    \pr(x+S_n+U \le -y) 
-
    \pr(x+S_n+U \le -y, \tau_x \le n)\\
&\hspace{1cm}=
    \pr(S_n+U\le -y-x) \\
&\hspace{2cm}-
    \sum_{k=1}^{n}
    \int_{0}^\infty
        \pr(\tau_x = k, x+S_k \in -dz) \pr(S_{n-k}+U \le z-y)\\
&\hspace{1cm}=
    \pr(S_n+U \le -y-x)
-
    \sum_{k=1}^{n}
        \pr(\tau_x= k) \pr(S_{n-k}+U\le -y)\\
&\hspace{2cm}
-
    \sum_{k=1}^{n}
    \int_{0}^\infty
        \pr(\tau_x = k,x+S_k \in -dz ) 
                \pr(S_{n-k}+U \in(-y,-y + z])
       .
\end{align*}
Set
\begin{align}
\label{eq:Pn-def}
&P_n(x,y)\nonumber\\   
\nonumber
&:=
    \pr(x+S_n+U\ge y,\tau_x>n)
-
    \pr(x+S_n+U\le-y,\tau_x>n)\\
\nonumber
&=
    \pr(S_n+U \ge y-x)
-
    \pr(S_n+U \le -y-x)\\
&\hspace{0.5cm}-
    \sum_{k=1}^{n}
        \pr(\tau_x = k) 
        \big[ 
            \pr(S_{n-k}+U \ge y) - \pr(S_{n-k}+U \le -y )
        \big]\\
\nonumber        
&\hspace{0.5cm}+
    \sum_{k=1}^{n}
    \int_{0}^\infty
        \pr(\tau_x = k,x+ S_k\in -dz)
        \pr\left( S_{n-k} +U \in (-y, -y+z] \cup [y, y+z) \right).
\end{align}
It is immediate from Lemma~\ref{lem:smoothing} that 
\begin{equation}
\label{eq:smooth}
\sup_{y\ge0}
    \bigl|
        \pr(x+S_n\ge y,\tau_x>n)-P_n(x,y)
    \bigr|
\le
    2\frac{\gamma_1(\E |U|)}{\sqrt{n}}
    \pr(\tau_x>n/2).
\end{equation}

We now estimate the second half of the last sum in \eqref{eq:Pn-def}. 
Writing the convolution with $U$ as the integral, we have
\begin{align*}
    &\pr(S_{n-k}+U\in(-y,-y+z]\cup[y,y+z))\\
    &\hspace{1cm}= 
    \int_{-\infty}^\infty 
    \pr(U\in du) 
    \pr\left( S_{n-k} +u \in (-y,-y+z]\cup[y,y+z) \right)\\
    &\hspace{1cm}= 
    \int_{-\infty}^\infty 
    \pr(U\in du) 
    \pr\left( S_{n-k} \in (-y-u, -y-u+z] \cup [y-u, y-u+z) \right).
 \end{align*}
 Applying now \eqref{eq.conc}, we obtain
 \begin{align*}
\pr(S_{n-k}+U\in(-y,-y+z]\cup[y,y+z))
 \le \sqrt{2}\frac{\gamma_1(z)}{\sqrt{n-k}}. 
\end{align*}
This implies that
\begin{multline*} 
    \sum_{k=\lf n/2 \rf+1}^{n-1} 
    \int_{0}^\infty
        \pr(\tau_x = k, x+ S_k\in -dz)
        \pr\left( S_{n-k} +U \in (-y, -y+z] \cup [y, y+z) \right)
\\
\le
    \sqrt{2}
    \sum_{k=\lf n/2 \rf+1}^{n-1}
        \frac{1}{\sqrt{n-k}}
        \E[\gamma_1(|x+S_{\tau_x}|);\tau_x=k].
\end{multline*}
Applying now the third claim in Lemma~\ref{lem:S_tau}, we obtain
\begin{align}
\label{eq:sum_sesond_half}
\nonumber
&\sum_{k=\lf n/2 \rf+1}^{n-1} 
    \int_{0}^\infty
        \pr(\tau_x = k, x+S_k\in -dz)
        \pr\left( S_{n-k} +U \in (-y, -y+z] \cup [y, y+z) \right)\\
\nonumber
 &\hspace{1cm}\le \sqrt{2}C_1\E|S_{\tau_x}|(\E|X_1|^3)^3
   \sum_{k=\lf n/2 \rf+1}^{n-1} \frac{1}{k(x+\sqrt{k})(n-k)^{1/2}}\\
\nonumber 
 &\hspace{1cm}\le 4C_1\frac{\E|S_{\tau_x}|}{n(x+\sqrt{n})}(\E|X_1|^3)^3 
   \sum_{k=\lf n/2 \rf+1}^{n-1} \frac{1}{(n-k)^{1/2}}\\
   &\hspace{1cm}\le 8C_1\frac{\E|S_{\tau_x}|}{\sqrt{n}(x+\sqrt{n})}(\E|X_1|^3)^3.
\end{align}
To obtain an appropriate estimate for the sum over $k\le \lf n/2\rf$ we shall use the following estimate for the density $u \mapsto f_{S_n+U}(u)$ of the random variable $U+S_n$. Due to Lemma 9 in \cite{DTW24a}, uniformly in $u \in \R$,
\begin{align} \label{eq:normapprox}
\left|f_{S_n+U}(u) -\frac{1}{\sqrt{2\pi n}} e^{-\frac{u^2}{2n}}\right|\le 
\left(
\frac{72\E|X_1|^3}{\pi }
+\frac{\E|U|}{\sqrt{2\pi e}}
\right)
\frac{1}{n}
=:C_2\frac{\E|X_1|^3}{n}. 
\end{align}

Hence, for every $k\le n/2$,
\begin{multline*}
\Big|\pr\left( S_{n-k} +U \in (-y, -y+z] \cup [y, y+z) \right) \\
-
\frac{1}{\sqrt{2\pi (n-k)}} 
\int_{(-y, -y+z] \cup [y, y+z) }
e^{-\frac{u^2}{2(n-k)}}du 
\Big|
\le \frac{4C_2\E|X_1|^3}{n}z. 
\end{multline*}
Furthermore, by (32) in \cite{DTW24a} we have
\begin{align}
\label{eq:cont_of_normal}
\left|
    \frac{1}{\sqrt{n-k}} e^{-\frac{u^2}{2(n-k)}}
-  
    \frac{1}{\sqrt{n}} e^{-\frac{u^2}{2n}}
\right|
\le
    \frac{2^{3/2}}{e}\frac{k}{n^{3/2}}
\end{align}
uniformly in $u\in\R$ and in $k \le n/2$. 
Applying this bound, we get
\begin{multline*}
\left|\pr\left( S_{n-k} +U \in (-y, -y+z] \cup [y, y+z) \right) -
\frac{1}{\sqrt{2\pi n}} 
\int_{(-y, -y+z] \cup [y, y+z) }
e^{-\frac{u^2}{2n}}du 
\right|\\
\le \frac{4C_2\E|X_1|^3}{n}z+\frac{2^{3/2}}{e}\frac{kz}{n^{3/2}}. 
\end{multline*}
Upper bound (31) in \cite{DTW24a} implies that
\begin{align*}
\left|
   \int_{(-y, -y+z] \cup [y, y+z) }
e^{-\frac{u^2}{2n}}dz  - 2z 
e^{-\frac{y^{2}}{2n}}
\right|
\le
    \frac{2z^2}{e^{1/2}\sqrt{n}}
\end{align*}
and, consequently,
\begin{align}\label{eq:upper-bound1}
\nonumber
&\Bigg|
    \sum_{k=1}^{\lf n/2 \rf}
    \int_{0}^\infty
        \pr(x+S_k \in -dz, \tau_x = k)
        \pr\left( S_{n-k} +U \in (-y, -y+z] \cup [y, y+z) \right)\\
\nonumber
&\hspace{4cm}-
    \frac{2}{\sqrt{2\pi n}}
    e^{-\frac{y^2}{2n}}
    \sum_{k=1}^{\lf n/2 \rf}
        \int_0^\infty
        z\pr(x+S_k \in -dz , \tau_x = k)
\Bigg|\\
\nonumber
&\hspace{1cm}\le 
    \frac{\sqrt{2}}{\sqrt{e\pi}n} 
    \E[|x+S_{\tau_x}|^2;\tau_x\le n/2]
   + \frac{4C_2\E|X_1|^3}{n} \E[|x+S_{\tau_x}|;\tau_x\le n/2]   \\ 
   &\hspace{2cm}+
   \frac{2^{3/2}}{en^{3/2}}
   \sum_{k=1}^{\lf n/2 \rf}
    k\E[|x+S_{\tau_x}|;\tau_x=k].
\end{align}
Applying Lemmata~\ref{lem:S_tau-1}, \ref{lem:S_tau-2} and \ref{lem:S_tau-3}
to the corresponding terms on the right hand side of \eqref{eq:upper-bound1},
we obtain 
\begin{align}\label{eq:upper-bound2}
\nonumber
&\Bigg|
    \sum_{k=1}^{\lf n/2 \rf}
    \int_{0}^\infty
        \pr(x+S_k \in -dz, \tau_x = k)
        \pr\left( S_{n-k} +U \in (-y, -y+z] \cup [y, y+z) \right)\\
%\nonumber
&\hspace{0.5cm}-
    \frac{2}{\sqrt{2\pi n}}
    e^{-\frac{y^2}{2n}}
    \sum_{k=1}^{\lf n/2 \rf}
        \int_0^\infty
        z\pr(x+S_k \in -dz , \tau_x = k)
\Bigg|\le 
  \frac{C_3 \E |S_{\tau_x}| (\E|X_1|^3)^2}{\sqrt{n}(x+\sqrt{n})}.
\end{align}

Combining \eqref{eq:upper-bound2} and \eqref{eq:sum_sesond_half}, and applying the second inequality from Lemma~\ref{lem:S_tau} for all $k$ between $\lf n/2\rf$ and $n$
we conclude that
\begin{align}\label{eq:last-sum}
\nonumber
&\Bigg|\sum_{k=1}^{n}
    \int_{0}^\infty
        \pr(\tau_x = k, x+S_k\in -dz)
        \pr\left( S_{n-k} +U \in (-y, -y+z] \cup [y, y+z) \right)\\
      &\hspace{4cm}-
    \frac{2}{\sqrt{2\pi n}}
    e^{-\frac{y^2}{2n}}
    \E|x+S_{\tau_x}|
\Bigg|
%&\hspace{2cm}
\le \frac{C_4 \E |S_{\tau_x}| (\E|X_1|^3)^3}{\sqrt{n}(x+\sqrt{n})},
\end{align}
with some absolute constant $C_4$.

To estimate other terms in \eqref{eq:Pn-def} we first notice that, due to
the Berry-Esseen inequality~\eqref{eq.berry-esseen.classical}, 
we have, uniformly in $y\in\R$,
\begin{align*}
    \big|
        \pr(S_{n-k} \ge y) 
    -
        \pr(S_{n-k} \le -y)
    \big|
\le
    \gamma_0\frac{\E |X_1|^3}{\sqrt{n-k}}.
\end{align*}
Then, convolving with $U$ and taking into account the symmetry of $U$, we conclude that 
\begin{align}\label{eq:BE-diff}
   \sup_{y\in\R} \big|
        \pr(S_{n-k}+U \ge y) 
    -
        \pr(S_{n-k} +U \le -y)
    \big|
\le
    \gamma_0\frac{\E |X_1|^3}{\sqrt{n-k}}.
\end{align}
Combining this with the estimate
$\pr(\tau_x=k)
\le C_1\frac{\E|S_{\tau_x}|}{k(x+\sqrt{k})}(\E|X_1|^3)^2$ from Lemma~\ref{lem:S_tau}, we get
\begin{align}\label{eq:2nd-line-b}
\nonumber
    &\Bigg|
        \sum_{k=\lf n/2 \rf}^{n-1}
            \pr(\tau_x = k)
            \big[
                \pr(S_{n-k}+U \ge y) 
             -
                \pr(S_{n-k}+U \le -y)
            \big]
    \Bigg|\\
    \nonumber
&\hspace{2cm}\le
    2^{3/2}C_1\frac{\E|S_{\tau_x}|}{n(x+\sqrt{n})}(\E|X_1|^3)^3
    \sum_{k=\lf n/2 \rf}^{n-1}
        \frac{1}{\sqrt{n-k}}\\
&\hspace{2cm}\le
    2^{5/2}C_1\frac{\E|S_{\tau_x}|}{\sqrt{n}(x+\sqrt{n})}(\E|X_1|^3)^3.
\end{align}

Following \cite{DTW24a} we introduce
$$
Q_n(x):=\pr(S_n+U\ge x)-\pr(S_n+U\le-x).
$$
Then we have
\begin{align}\label{eq:sum-repr}
\nonumber
&\sum_{k=1}^{\lf n/2 \rf}
        \pr(\tau_x = k)
        \big[
            \pr(S_{n-k} +U\ge y) 
         -
            \pr(S_{n-k} +U\le -y)]\\
\nonumber
&\hspace{1cm}
 =\sum_{k=1}^{\lf n/2 \rf}
        \pr(\tau_x = k)Q_{n-k}(y)\\
 &\hspace{1cm}
 =Q_n(y)\pr(\tau_x\le \lf n/2 \rf)
   +\sum_{k=1}^{\lf n/2 \rf}
        \pr(\tau_x = k)[Q_{n-k}(y)-Q_n(y)].       
\end{align}
According to Lemma 11 in \cite{DTW24a}, 
\begin{align*}
    \sup_y\big|Q_{n-k}(y) - Q_{n}(y)\big| 
\le
    109\sqrt{\frac{3}{\pi}}2^{3/2}\E|X_1|^3\frac{k}{n^{3/2}}
\end{align*}
uniformly in $k\le n/2$. Combining this with Lemma~\ref{lem:tau-exp}, we infer that
\begin{align}\label{eq:Q-sum}
\nonumber
\left|\sum_{k=1}^{\lf n/2 \rf}
        \pr(\tau_x = k)[Q_{n-k}(y)-Q_n(y)]\right|    
 &\le C\E|X_1|^3\frac{1}{n^{3/2}}\sum_{k=1}^n k\pr(\tau_x=k)\\
  &\le C(\E|X_1|^3)^2\frac{\E|S_{\tau_x}|}{\sqrt{n}(x+\sqrt{n})}.
\end{align}
Furthermore, we know from \eqref{eq:BE-diff} that
$|Q_n(y)|\le \gamma_0\frac{\E|X_1|^3}{\sqrt{n}}$. Combining this with
\eqref{eq:tau2:corr0}, we have
\begin{equation}
\label{eq:Q-trivial}
\left|Q_n(y)\pr(\tau_x\le \lf n/2 \rf)-Q_n(y)\right|
\le 6\E|X_1|^3\frac{\E|S_{\tau_x}|}{\sqrt{n}(x+\sqrt{n})}.  
\end{equation}
Applying \eqref{eq:Q-sum} and \eqref{eq:Q-trivial} to the corresponding terms in \eqref{eq:sum-repr}, we conclude that 
\begin{align}\label{eq:2nd-line}
\nonumber
&\left|\sum_{k=1}^{\lf n/2 \rf}
        \pr(\tau_x = k)
        \big[
            \pr(S_{n-k} +U\ge y) 
         -
            \pr(S_{n-k} +U\le -y)]-Q_n(y)\right| \\
&\hspace{7cm}
    \le C(\E|X_1|^3)^2\frac{\E|S_{\tau_x}|}{\sqrt{n}(x+\sqrt{n})}.
\end{align}
Plugging \eqref{eq:last-sum}, \eqref{eq:2nd-line-b} and \eqref{eq:2nd-line}
into \eqref{eq:Pn-def}, we obtain 
\begin{align}
\label{eq:Pn-interm}
\nonumber
\Bigg|P_n(x,y)-\pr(S_n+U\ge y&-x)+\pr(S_n+U\le-y-x)+Q_n(y)\\
& -\frac{2}{\sqrt{2\pi n}} e^{-\frac{y^2}{2n}} \E|x+S_{\tau_x}|\Bigg|
 \le C(\E|X_1|^3)^3\frac{\E|S_{\tau_x}|}{\sqrt{n}(x+\sqrt{n})}.
\end{align}
We next notice that 
\begin{align}\label{eq:Q-repr}
\nonumber
&\pr(S_n+U\ge y-x)-\pr(S_n+U\le-y-x)-Q_n(y)\\
\nonumber
&\hspace{1cm}
=\pr(S_n+U\ge y-x)-\pr(S_n+U\ge y+x)+Q_n(x+y)-Q_n(y)\\
&\hspace{1cm}
=\pr(S_n+U\in[y-x,y+x))+Q_n(x+y)-Q_n(y).
\end{align}
Using \eqref{eq:normapprox} for $x\le\sqrt{n}$ and \eqref{eq:BE-diff} for $x>\sqrt{n}$, we conclude that 
\begin{align}
\label{eq:new1}
\nonumber
\left|Q_n(y+x)-Q_n(y)\right|
&=\left|\pr(S_n+U\in(-x-y,-y])-\pr(S_n+U\in[y,y+x)])\right|\\
&\le 2C_2\E|X_1|^3\frac{x}{\sqrt{n}(x+\sqrt{n})}
\end{align}
and, by similar arguments, 
\begin{align}
\label{eq:new2}  
\nonumber
&\left|\pr(S_n+U\in[y-x,y+x))
-\frac{1}{\sqrt{2\pi n}}
   \int_{y-x}^{y+x}e^{-u^2/2n}du\right|\\
&\hspace{2cm}\le 2C_2\E|X_1|^3\frac{x}{\sqrt{n}(x+\sqrt{n})}.
\end{align}
Applying \eqref{eq:new1} and \eqref{eq:new2} to the corresponding summands in \eqref{eq:Q-repr}, we conclude that 
\begin{align*}
&\left|
\pr(S_n+U\ge y-x)-\pr(S_n+U\le-y-x)-Q_n(y)
-\frac{1}{\sqrt{2\pi n}}
   \int_{y-x}^{y+x}e^{-u^2/2n}du \right|\\
  &\hspace{2cm} \le 4C_2\E|X_1|^3\frac{x}{\sqrt{n}(x+\sqrt{n})}.
\end{align*}
Combining this with \eqref{eq:Pn-interm}, we obtain 
\begin{align*}
&\left|P_n(x,y)
   -\frac{1}{\sqrt{2\pi n}}\int_{y-x}^{y+x}e^{-u^2/2n}du
   -\frac{2}{\sqrt{2\pi n}} e^{-\frac{y^2}{2n}} \E|x+S_{\tau_x}|\right|  \\
 &\hspace{2cm}  \le C(\E|X_1|^3)^3\frac{\E|S_{\tau_x}|}{\sqrt{n}(x+\sqrt{n})}.
\end{align*}
This bound, in combination with \eqref{eq:smooth} and \eqref{eq:tau2:corr0}, implies that 
\begin{align} \label{eq:mainresult}
&\left|\pr(x+S_n\ge y,\tau_x>n)
   -\frac{1}{\sqrt{2\pi n}}\int_{y-x}^{y+x}e^{-u^2/2n}du
   -\frac{2}{\sqrt{2\pi n}} e^{-\frac{y^2}{2n}} \E|x+S_{\tau_x}|\right|  \\
  &\hspace{4cm} \le \nonumber C(\E|X_1|^3)^3\frac{\E|S_{\tau_x}|}{\sqrt{n}(x+\sqrt{n})}.
 \end{align}
Thus, Theorem~\ref{thm:BE} is proved.

In order to prove Corollary~\ref{cor} we consider the function $\phi(t) = e^{-t^2/2}$ and use Taylor expansion. Then we have for $\theta = \theta(u) \in [0,1]$
\begin{align*}
    \phi((y+u)/\sqrt{n})
-
    \phi(y/\sqrt{n})
=
    u/\sqrt{n}\phi'(y/\sqrt{n})
+
    \frac{u^2}{2n}
    \phi''((y+\theta u)/\sqrt{n}).
\end{align*}
Integrating this from $-x$ to $x$ and multiplying by $1/\sqrt{2\pi n}$ we get
\begin{align*}
    \left|
        \frac{1}{\sqrt{2\pi n}}
        \int_{y-x}^{y+x}
            e^{-u^2/2n}du
    -
        \frac{2x}{\sqrt{2\pi n}}
        e^{-y^2/2n}
    \right|
&\le
    \sup\limits_{t \in \R}
    \phi''(t)
    \frac{1}{\sqrt{2\pi}n^{3/2}}
    \int_{-x}^x
        u^2/2du\\
&\le
    \frac{x^3}{3\sqrt{2\pi}n^{3/2}}.
\end{align*}
Combining this with \eqref{eq:mainresult} and noting that $x \le x+ \E|x + S_{\tau_x}| = \E |S_{\tau_x}|$ we obtain
\begin{align*}
    \left|
        \pr(x+S_n\ge y,\tau_x>n)
   -
        \frac{2}{\sqrt{2\pi n}} 
        e^{-\frac{y^2}{2n}} \E|S_{\tau_x}|\right|  
    \le C(\E|X_1|^3)^3\frac{\E|S_{\tau_x}|}{n}
+
    \frac{x^2\E|S_{\tau_x}|}{3\sqrt{2\pi}n^{3/2}}.
\end{align*}
Then considering $y=0$ we obtain
\begin{align*}
        \left|
        \frac{\pr(\tau_x>n)}
        {\sqrt{\frac{2}{\sigma^2\pi}}\E|S_{\tau_x}|n^{-1/2}}
    -
        1
    \right|
\le
    A_2\frac{(\E|X_1|^3)^3}{\sigma^9\sqrt{n}}
     + A_3\frac{x^2}{\sigma^2 n}
\end{align*}
for some absolute constants $A_2$ and $A_3$.
Hence 
\begin{align*}
        \left|
        \pr(x+S_n\ge y|\tau_x>n)
   -
        e^{-\frac{y^2}{2\sigma^2n}}
    \right|  
\le
    A_2\frac{(\E|X_1|^3)^3}{\sigma^9\sqrt{n}}
     + A_3\frac{x^2}{\sigma^2 n}.
\end{align*}
Thus Corollary~\ref{cor} is proved.

\section{Improvement of Theorem~\ref{thm:BE} for lattice and absolute continuous random walks}
As we have already mentioned in the introduction, the only reason for the appearance of the third power of the Lyapunov ratio in Theorem~\ref{thm:BE} is the bound \eqref{eq.conc}, where we have used the classical Berry-Esseen inequality to bound local probabilities for $S_n$. However,  local central limit theorems suggest that there should exist an upper bound for $\pr(S_n\in[y,y+z])$ which does not contain $\E|X_1|^3$ in the leading term. But to obtain such estimates one needs to introduce some assumptions on the 'local structure' of the distribution of increments $\{X_k\}$. In this section we show how such alternative estimates can be obtained in the case when the distribution of $X_1$ is either absolute continuous with respect to the Lebesgue measure or lattice with maximal span $1$.
Using these estimates we explain how to reduce the power of the Lyapunov ratio in Theorem~\ref{thm:BE}.

Let us start by mentioning a version of the Berry-Esseen inequality for local central limit theorem for densities. If the distribution of $X_1$ is absolutely continuous with a bounded density $p(x)$ then there exists an absolute constant $A$ such that, see the paper by Bobkov and Götze~\cite{BG25},
\begin{align}
\label{eq:BE-density}
\sup_{x\in\R}\left|p_n(x)-\frac{1}{\sqrt{2\pi}}e^{-x^2/2}\right|
\le A\frac{\E|X_1|^3}{\sqrt{n}}\|p\|_\infty^2,
\end{align}
where $p_n$ denotes the density of $S_n/\sqrt{n}$.

We next derive an analogue of this inequality for lattice random walks.
\begin{lemma}
\label{lem:BE-lattice}
Assume that all the conditions of Theorem~\ref{thm:BE} are valid. If the distribution of $X_1$ is lattice with the maximal span $1$ then
\begin{align*}
    \sup_{x \in \Z}
    \left|
        \sqrt{n} \pr(S_n = x) 
    -
        \frac{1}{\sqrt{2\pi}}
        e^{-x^2/2n}
    \right|
\le
    \left(
        \frac{76}{\pi}
    +
        \frac{24}{\pi V}
    \right)
    \frac{\beta_3}{\sqrt{n}},
\end{align*}
where 
\begin{align*}
    V = V(X_1) 
=
    -\sup\limits_{t \in (0,2\pi)}
    \frac{\log |\varphi(t)|}{1-\cos t}.
\end{align*}
\end{lemma}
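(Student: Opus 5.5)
We would prove Lemma~\ref{lem:BE-lattice} by Fourier inversion on the lattice, comparing it with the Gaussian inversion formula. Write $\beta_3=\E|X_1|^3$ and $\sigma^2=1$. Since $X_1$ is lattice with maximal span $1$,
\[
\pr(S_n=x)=\frac{1}{2\pi}\int_{-\pi}^{\pi}\varphi^n(t)e^{-itx}\,dt,
\qquad
\frac{1}{\sqrt{2\pi n}}e^{-x^2/2n}=\frac{1}{2\pi}\int_{\R}e^{-nt^2/2}e^{-itx}\,dt .
\]
Hence
\[
\sqrt n\Bigl|\pr(S_n=x)-\tfrac{1}{\sqrt{2\pi n}}e^{-x^2/2n}\Bigr|
\le \frac{\sqrt n}{2\pi}\Bigl(\int_{|t|\le T}|\varphi^n(t)-e^{-nt^2/2}|dt+\int_{T<|t|\le\pi}|\varphi(t)|^n dt+\int_{|t|>T}e^{-nt^2/2}dt\Bigr).
\]
We choose $T=1/(4\beta_3)$; since $\beta_3\ge1$ we have $T\le\pi$. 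The bound is uniform in $x$, because $|e^{-itx}|=1$.

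For the first integral we use the standard estimate from the proof of the Berry--Esseen inequality: for $|t|\le 1/(4\beta_3)$,
\[
|\varphi^n(t/\sqrt n)-e^{-t^2/2}|\le \frac{16\beta_3|t|^3}{\sqrt n}e^{-t^2/3},
\]
in the version valid for $|t|\le\sqrt n/(4\beta_3)$. After the substitution $t\mapsto t/\sqrt n$, the first integral multiplied by $\sqrt n$ is at most $\frac{16\beta_3}{\sqrt n}\int|t|^3e^{-t^2/3}dt=\frac{16\cdot 9\beta_3}{\sqrt n}$. With the factor $1/(2\pi)$ this is of order $\beta_3/(\pi\sqrt n)$, which accounts for the $76/\pi$ term. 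The Gaussian tail $\sqrt n\int_{|t|>T}e^{-nt^2/2}dt$ is bounded by $\frac{2}{T\sqrt n}e^{-nT^2/2}\le 8\beta_3/\sqrt n$, which also goes into the first constant.

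The main step is the middle integral, and this is where $V$ enters. By definition of $V$, $|\varphi(t)|\le e^{-V(1-\cos t)}$ on $(0,2\pi)$, and by symmetry also on $(-\pi,0)$. Using $1-\cos t\ge 2t^2/\pi^2$ for $|t|\le\pi$, we get
\[
\sqrt n\int_{T<|t|\le\pi}|\varphi(t)|^ndt\le 2\sqrt n\int_T^\infty e^{-2nVt^2/\pi^2}dt\le \frac{\pi^2}{2VT\sqrt n}=\frac{2\pi^2\beta_3}{V\sqrt n}.
\]
Dividing by $2\pi$ gives a contribution of the form $\mathrm{const}\cdot\beta_3/(V\sqrt n)$. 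To land exactly on $24/(\pi V)$ we may need slightly sharper elementary inequalities, for instance using $e^{-a}\le 1/(ea)$ to trade the Gaussian tail for a $1/(Vt^2)$ bound. We also need to check that $V>0$, which follows from maximal span $1$: $|\varphi(t)|<1$ on $(0,2\pi)$, with quadratic behaviour near $0$ and $2\pi$.

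Adding the three contributions gives the claim. The only delicate points are the bookkeeping of the numerical constants and the requirement $T\le\pi$, which is ensured by $\beta_3\ge1$.
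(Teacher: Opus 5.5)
Your proposal is correct and is essentially the paper's proof. Both use lattice Fourier inversion split at $|t|=1/(4\beta_3)$, which is $\sqrt n/(4\beta_3)$ after rescaling. The standard Berry--Esseen estimate $16L_n|t|^3e^{-t^2/3}$ gives $72/\pi$, the Gaussian tail gives $4/\pi$, and on the middle range you use $|\varphi(t)|\le e^{-V(1-\cos t)}$ together with a quadratic lower bound for $1-\cos t$.

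Your hedge at the end is unnecessary. Your middle-range estimate already gives $\pi\beta_3/(V\sqrt n)$, and $\pi\le 24/\pi$ because $\pi^2<24$. This is even sharper than the paper's $4/(\pi(1-\pi^2/12)V)$, because $1-\cos t\ge 2t^2/\pi^2$ is a stronger quadratic bound than the one used there.
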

\begin{remark}
The quantity $V(X_1)$ was introduced by Bobkov and Ulyanov in \cite{BU22} and can be seen as a quantitative characteristic of the assumption on the maximal span of $X_1$.
\hfill$\diamond$
\end{remark}
\begin{proof}[Proof of Lemma~\ref{lem:BE-lattice}]
By the inversion formula for lattice random variables,
\begin{align*}
    \pr(S_n = x)
= \frac{1}{2\pi}\int\limits_{-\pi}^{\pi} e^{-itx}\,\varphi^n(t)dt 
=
    \frac{1}{\sqrt{2\pi n}} \int\limits_{-\pi\sqrt n}^{\pi\sqrt n}
   e^{-it\,\frac{x}{\sqrt n}}\,
   \varphi^n\left(\frac{t}{\sqrt n}\right) dt .
\end{align*}

Consequently,
\begin{align*}
    \sqrt n\,\pr(S_n=x) - \frac{1}{\sqrt{2\pi}}e^{-x^2/2n}
= 
    \frac{1}{2\pi}\int\limits_{-\pi \sqrt{n}}^{-\pi \sqrt{n}}
   \varphi^n\!\left(\frac{t}{\sqrt n}\right)dt
   -\frac{1}{2\pi}
   \int\limits_{-\infty}^{\infty}e^{it\,\frac{x}{\sqrt n}}e^{-t^2/2} dt .
\end{align*}

Splitting the integrals at $\tfrac{1}{4L_n}=\tfrac{\sqrt n}{4\beta_3}$
(with $\beta_3=\mathbb{E}|X_1|^3$), we obtain
\begin{align} \label{eq:lattice:3integrals}
    &\sup_{x\in \Z} \Big|\sqrt{n}\pr(S_n=x) - \tfrac{1}{\sqrt{2\pi}}e^{-x^2/2n}\Big|\\
&\nonumber\hspace{4cm}
\le 
    \frac{1}{2\pi}\int\limits_{1/4L_n}^{1/4L_n}
   \Big|\varphi^n\!\Big(\tfrac{t}{\sqrt n}\Big) - e^{-t^2/2}\Big|dt \\
&\nonumber\hspace{4cm}
+
    \frac{1}{2\pi} \int\limits_{|t| \in [\frac{1}{4L_n}, \pi \sqrt{n}]} \big| \varphi^n(t/\sqrt{n})\big| dt
+
    \frac{1}{\pi} 
    \int\limits_{1/4L_n}^{\infty}
        e^{-t^2/2}dt\\
&\nonumber\hspace{4cm}
=:
    I_1 + I_2 + I_3 .
\end{align}
Using the bound
\[
\Big|\varphi^n\!\Big(\tfrac{t}{\sqrt n}\Big) - e^{-t^2/2}\Big|
\le 16L_n|t|^3 e^{-t^2/3}, \qquad |t|\le \tfrac{1}{4L_n},
\]
we obtain
\begin{align}\label{eq:lattice:I_1estim}
    I_1 \nonumber
&\le 
    \frac{16L_n}{\pi\sqrt n}\int_{0}^{1/(4L_n)} t^2 e^{-t^2/3}dt
\le
    \frac{16}{\pi}
    \frac{\beta_3}{\sqrt{n}}
    \int\limits_0^\infty
        t^3 e^{-t^2/3}dt\\
&=
    \frac{16}{\pi}
    \frac{\beta_3}{\sqrt{n}}
    \frac{9}{2}
    \int\limits_0^\infty
        z e^{-z}dz
=
    \frac{72}{\pi}
    \frac{\beta_3}{\sqrt{n}}.
\end{align}
For $I_3$ we have
\begin{align} \label{eq:lattice:T_3estim}
    I_3 
\le
    \frac{4}{\pi}
    \frac{\beta_3}{\sqrt{n}}
    \int\limits_{1/4L_n}^\infty
        te^{-t^2/2}dt
=
    \frac{4}{\pi}
    \frac{\beta_3}{\sqrt{n}}
    e^{-n/32\beta_3}
\le
    \frac{4}{\pi}
    \frac{\beta_3}{\sqrt{n}}.
\end{align}
In order to bound $I_2$ we notice that our assumption on the maximal span of $X_1$ implies that $V > 0$. Moreover, we have the bound
\begin{align*}
    \log |\varphi(t)| \le -V(1-\cos t), \quad t \in [-\pi, \pi].
\end{align*}
Therefore,
\begin{align*}
    I_2 
& \le 
    \frac{1}{\pi}
    \int\limits_{1/4L_n}^{\pi \sqrt{n}}
        e^{-nV(1-\cos (t/\sqrt{n}))}
        dt\\
& \le
    \frac{1}{\pi}
    \int_{1/4L_n}^{\pi \sqrt{n}}
        e^{-nV\left(\frac{t}{\sqrt{n}}\right)^2\left(1-\frac{\pi^2}{12}\right)}
        dt
\le
    \frac{1}{\pi}
    \int_{1/4L_n}^{\infty}
        e^{-\widetilde{V}t^2/2}
        dt,
\end{align*}
where $\widetilde{V} = V\left(1-\frac{\pi^2}{12}\right)$.
Then we have
\begin{align} \label{eq:lattice:I_2estim}
    I_2
\le
    \frac{4\beta_3}{\pi \sqrt{n}}
    \int\limits_{1/4L_n}^{\infty}
        te^{-\widetilde{V}t^2/2}
        dt
=
    \frac{4\beta_3}{\pi\widetilde{V} \sqrt{n}}
    e^{-n\widetilde{V}/32\beta_3}
\le
    \frac{4\beta_3}{\pi\widetilde{V} \sqrt{n}}.
\end{align}
Plugging \eqref{eq:lattice:I_1estim} - \eqref{eq:lattice:I_2estim} into \eqref{eq:lattice:3integrals}, we obtain 
\begin{align*}
    \sup_{x \in \Z}
    \left|
        \sqrt{n} \pr(S_n = x) 
    -
        \frac{1}{\sqrt{2\pi}}
        e^{-x^2/2n}
    \right|
&\le
    \left(
        \frac{76}{\pi}
    +
        \frac{4}{\pi (1-\pi^2/12)V}
    \right)
    \frac{\beta_3}{\sqrt{n}}\\
&\le
    \left(
        \frac{76}{\pi}
    +
        \frac{24}{\pi V}
    \right)
    \frac{\beta_3}{\sqrt{n}}.
\end{align*}
Thus, the proof of the lemma is complete.
\end{proof}
Using \eqref{eq:BE-density} in the absolutely continuous case and Lemma~\ref{lem:BE-lattice} in the lattice case we can sharpen the bound \eqref{eq.conc} for local probabilities. Indeed, \eqref{eq:BE-density} implies that 
\begin{equation*}
\pr(x+S_n\in[y,y+z])
\le\left(\frac{1}{\sqrt{2\pi n}}+ A\frac{\|p\|_\infty\E|X_1|^3}{n}\right)z.
\end{equation*}
Under the assumptions of Lemma~\ref{lem:BE-lattice} one has 
\begin{equation*}
\pr(x+S_n\in[y,y+z])
\le\left(\frac{1}{\sqrt{2\pi n}}+
\left(\frac{76}{\pi}+\frac{24}{\pi V}\right)
    \frac{\E|X_1|^3}{n}\right)(z+1).
\end{equation*}
Letting $z=1$ in these inequalities, we conclude that if the distribution of $X_1$ is absolute continuous or lattice with the maximal span $1$ then there exists an absolute constant $A$ such that 
\begin{equation}
\label{eq:new-conc}
\pr(x+S_n\in[y,y+1])
\le \frac{A}{\sqrt{2n}}\left(1+R\frac{|X_1|^3}{\sqrt{2n}}\right),
\end{equation}
where $R=\|p\|_\infty^2$ in the absolute continuous case and $R=V^{-1}$ in the lattice case.

Replacing \eqref{eq.conc} by \eqref{eq:new-conc} in the proof of Lemma~\ref{lem:concrete.bound.alternative}, we obtain
$$
\pr(x+S_n\in[y,y+1),\tau_x>n)
\le C\left(1+R\frac{|X_1|^3}{\sqrt{k}}\right)
\frac{\E|S_{\tau_x}|(y+\E|X_1|^3)}{n(x+\sqrt{n})}.
$$
This, in its turn, leads to the following improvements of bounds in Lemma~\ref{lem:S_tau}:
$$
\pr(\tau_x=k)\le C_1\left(1+R\frac{|X_1|^3}{\sqrt{k}}\right)
\frac{\E|S_{\tau_x}|}{k(x+\sqrt{k})}\E|X_1|^3,
$$
$$
\E[|x+S_{\tau_x}|;\tau_x=k]\le C_1\left(1+R\frac{|X_1|^3}{\sqrt{k}}\right)
\frac{\E|S_{\tau_x}|}{k(x+\sqrt{k})}\E|X_1|^3
$$
and
$$
\E[|x+S_{\tau_x}|^2;\tau_x=k]\le C_1\left(1+R\frac{|X_1|^3}{\sqrt{n}}\right)
\frac{\E|S_{\tau_x}|}{\sqrt{k}(x+\sqrt{k})}\E|X_1|^3
$$
for all $n\ge 32(\E|X_1|^3)^2$+5. 

Furthermore, using \eqref{eq:new-conc} in the arguments leading to \eqref{eq:sum_sesond_half}, we can see that the right hand side in this estimate changes to $C\frac{\E|S_{\tau_x}|}{\sqrt{n}(x+\sqrt{n})}\E|X_1|^3\left(1+R\frac{\E|X_1|^3}{\sqrt{n}}\right)$. Since this was the only place where the third power of $\E|X_1|^3$ arises, we conclude that
the right hand side in\eqref{eq:BE1} can replaced by
$$
C\frac{(\E|X_1|^3)^2\E|S_{\tau_x}|}{\sigma^6 \sqrt{n}(x+\sqrt{n})}
\left(1+R\frac{\E|X_1|^3}{\sqrt{n}}\right)
$$
provided that the distribution of $X_1$ is either absolute continuous or lattice with maximal span $1$.
\bibliographystyle{abbrv}
\bibliography{references}
\end{document}